\newtheorem{theorem}{Theorem}[section]
\newtheorem{lemma}[theorem]{Lemma}
\numberwithin{equation}{section}
\newcommand{\beq}{\begin{equation}}
\newcommand{\eeq}{\end{equation}}
\newcommand{\Rmnum}[1]{\expandafter\@slowromancap\romannumeral #1@}
\def\na{\nabla}
\newcommand{\p}{\partial}
\newcommand{\ben}{\begin{eqnarray}}
\newcommand{\een}{\end{eqnarray}}
\newcommand{\beno}{\begin{eqnarray*}}
\newcommand{\eeno}{\end{eqnarray*}}
\begin{document}

\title[Global  solutions of 3D  MHD equations with mixed dissipation]
{Global solutions of 3D incompressible MHD system with mixed partial dissipation
and magnetic diffusion near an equilibrium}

\author[Jiahong Wu and Yi Zhu ]{Jiahong Wu $^{1}$ and Yi Zhu $^{2}$}

\address{$^1$ Department of Mathematics, Oklahoma State University, Stillwater, OK 74078, United States }

\email{jiahong.wu@okstate.edu}

\address{$^2$ Department of Mathematics, East China University of Science and Technology, Shanghai 200237,  P.R. China}

\email{zhuyim@ecust.edu.cn}

\date{}
\subjclass[2010]{35A01, 35B35, 35B65, 76D03, 76E25}
\keywords{3D magnetohydrodynamic equations, background magnetic field, mixed dissipation, stability}

\begin{abstract}
This paper focuses on the 3D incompressible magnetohydrodynamic (MHD) equations with
mixed partial dissipation and magnetic diffusion. Our main result assesses the
global stability of perturbations near the steady solution given by a background magnetic field. The stability problem on the MHD equations with partial or no dissipation has attracted considerable interests recently and there are substantial developments. The new stability result presented here is among the very few stability conclusions currently available for ideal or partially dissipated MHD equations. As
 a special consequence of the techniques introduced in this paper, we obtain the small data global well-posedness for the 3D incompressible Navier-Stokes equations without vertical dissipation.
\end{abstract}

\maketitle

\section{introduction}

The magnetohydrodynamic (MHD) equations reflect the basic physics laws governing the
motion of electrically conducting fluids such as plasmas, liquid metals, and electrolytes. The velocity field obeys the Navier-Stokes equations with
Lorentz forcing
generated by the magnetic field while the magnetic field satisfies the Maxwell's equations of electromagnetism. The MHD equations have played pivotal roles in the study of many phenomena in geophysics,
astrophysics, cosmology and engineering (see, e.g., 
\cite{Bis,Davi,Pri}).

\vskip .1in
The MHD equations are also mathematically significant. The MHD equations share similarities with the Navier-Stokes equations, but they contain much richer structures than the Navier-Stokes
equations. They are not merely a combination of two parallel
Navier-Stokes type equations but an interactive and integrated system. Their distinctive features make analytic studies a great challenge but offer new opportunities.

\vskip .1in
Two fundamental problems on the MHD equations have recently attracted considerable interests. The first is the existence and uniqueness of solutions while the
second concerns the stability of perturbations near physically relevant equilibrium.
There have been substantial developments on these problems, especially on  those MHD systems with only partial or fractional dissipation.

\vskip .1in
This paper focuses on a stability problem concerning the following 3D incompressible MHD system with mixed partial dissipation and magnetic diffusion,
\begin{equation}\label{mhd0}
\begin{cases}
\p_t u + u \cdot \nabla u - \partial_1^2 u -\partial_2^2 u + \nabla P = B \cdot \nabla B,\\
\p_t B + u \cdot \nabla B - \partial_3^2 B = B \cdot \nabla u, \\
\nabla \cdot u = \nabla \cdot B = 0,
\end{cases}
\end{equation}
where $u$ represents the velocity field, $P$ the total pressure and $B$ the magnetic field. (\ref{mhd0}) may be physically relevant when the vertical dissipation and horizontal magnetic diffusion can be ignored.
It is clear that a special solution of (\ref{mhd0}) is given by the zero velocity field and the background
magnetic field $B^{(0)} =e_1$, where $e_1=(1,0, 0)$. The perturbation $(u, b)$  around this equilibrium with $b= B - e_1$ obeys
\begin{equation}\label{mhd}
\begin{cases}
\p_t u + u \cdot \nabla u - \Delta_h u + \nabla P = b \cdot \nabla b + \partial_1 b,\\
\p_t b + u \cdot \nabla b - \partial_3^2 b = b \cdot \nabla u + \partial_1 u, \\
\nabla \cdot u = \nabla \cdot b = 0.
\end{cases}
\end{equation}
where, for notational convenience, we have written
$$
\Delta_h = \partial_1^2 + \partial_2^2
$$
and we shall also write $\na_h =(\p_1, \p_2)$.

\vskip .1in
This paper aims at the stability problem on the perturbation $(u, b)$. Equivalently, we establish a small data global well-posedness result for (\ref{mhd}) supplemented with the initial condition
$$
u(x, 0)  = u_0(x), \qquad b(x,0) =b_0(x).
$$
Our main result can be stated as follows. The notation $A \lesssim D$ means $A \le C\, D$ for a pure constant $C$.
\begin{theorem} \label{good}
	Consider (\ref{mhd}) with the initial data $(u_0, b_0) \in H^3(\mathbb{R}^3)$
	and $\nabla \cdot u_0 = \nabla \cdot b_0 = 0$, Then there exists a constant $\epsilon>0$ such that, if
	\begin{equation}\nonumber
	\|u_0\|_{H^3} + \|b_0\|_{H^3} \leq \epsilon,
	\end{equation}
	then  \eqref{mhd} has a unique global classical solution $(u, b)$ satisfying, for any $t>0$,
	$$
		\|u(t)\|_{H^3} + \|b(t)\|_{H^3} + \int_0^t \left(\|\na_h u\|_{H^3}^2 + \|\p_3 b\|^2_{H^3} + \|\p_1 b\|^2_{H^2}\right) \,d\tau \lesssim\;\epsilon.
	$$	
\end{theorem}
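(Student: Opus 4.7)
The plan is a standard small-data bootstrap built on a carefully designed energy--dissipation inequality; local well-posedness in $H^3$ is routine (Friedrichs mollification followed by a contraction argument), so the entire content lies in a closed a priori estimate. Set
\begin{equation*}
\mathcal{E}(t):=\|u\|_{H^3}^2+\|b\|_{H^3}^2, \qquad \mathcal{D}(t):=\|\na_h u\|_{H^3}^2+\|\p_3 b\|_{H^3}^2+\|\p_1 b\|_{H^2}^2,
\end{equation*}
and aim at
\begin{equation*}
\mathcal{E}(t) + \int_0^t \mathcal{D}(\tau)\,d\tau \;\lesssim\; \mathcal{E}(0) + \int_0^t \mathcal{E}^{1/2}(\tau)\mathcal{D}(\tau)\,d\tau,
\end{equation*}
which closes by continuity once $\mathcal{E}(0)\le \epsilon^2$ is small enough. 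The first step is the basic $H^3$ energy estimate: apply $\p^\alpha$ to both equations for each $|\alpha|\le 3$, pair with $\p^\alpha u$ and $\p^\alpha b$ respectively, and use $\nabla\cdot u=\nabla\cdot b=0$. The two linear cross terms from the coupling cancel after integration by parts at every derivative level, and the dissipative terms yield exactly $\|\na_h u\|_{H^3}^2+\|\p_3 b\|_{H^3}^2$ on the left. This recovers two of the three pieces of $\mathcal{D}$; the missing $\|\p_1 b\|_{H^2}^2$ is the crux of the problem and reflects that the stabilizing wave structure generated by $B^{(0)}=e_1$ must be extracted explicitly.

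To recover $\|\p_1 b\|_{H^2}^2$, introduce the auxiliary cross functional
\begin{equation*}
G(t):=\sum_{|\alpha|\le 2}\int_{\R^3}\p^\alpha u\cdot\p^\alpha\p_1 b\,dx.
\end{equation*}
Differentiating in time and substituting for $\p_t u$ and $\p_t b$ from \eqref{mhd}, the pressure contribution drops out by $\nabla\cdot b=0$; the linear $+\p_1 b$ in the $u$-equation produces $+\|\p_1 b\|_{H^2}^2$, and the linear $+\p_1 u$ in the $b$-equation produces, after one integration by parts in $x_1$, the term $-\|\p_1 u\|_{H^2}^2$. Two linear cross terms remain: the first pairs $\p^\alpha\Dd_h u$ with $\p^\alpha\p_1 b$ and is absorbed via Young's inequality into $\eta\|\p_1 b\|_{H^2}^2+C\|\na_h u\|_{H^3}^2$; the second pairs $\p^\alpha u$ with $\p^\alpha\p_1\p_3^2 b$, and after an integration by parts in $x_1$ becomes a pairing of $\p_1 u$ at level $H^2$ with $\p_3(\p_3 b)$ at level $H^2$, each of which sits inside the two already-established dissipations. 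Combining this $G$-identity with the basic $H^3$ identity via the modified energy $\mathcal{E}-\lambda G$ for a small $\lambda>0$ (equivalent to $\mathcal{E}$) adds $\lambda\|\p_1 b\|_{H^2}^2$ to the dissipation on the left side of the combined inequality.

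The remaining and genuinely hard part is to bound every nonlinear contribution in both identities by $C\mathcal{E}^{1/2}\mathcal{D}$. The dissipation $\mathcal{D}$ is highly anisotropic -- only $\na_h$ on $u$ and only $\p_3$ on $b$ -- so the usual isotropic Sobolev embedding $H^2\hookrightarrow L^\infty$ cannot be applied naively, as it would consume derivatives in directions for which no dissipation is available. The essential tools are the anisotropic Cao--Wu-type inequality
\begin{equation*}
\int_{\R^3}|fgh|\,dx \;\lesssim\; \|f\|_{L^2}^{1/2}\|\p_i f\|_{L^2}^{1/2}\,\|g\|_{L^2}^{1/2}\|\p_j g\|_{L^2}^{1/2}\,\|h\|_{L^2}^{1/2}\|\p_k h\|_{L^2}^{1/2}
\end{equation*}
for distinct $i,j,k\in\{1,2,3\}$, together with $L^\infty_{x_h}L^2_{x_3}$ versus $L^2_{x_h}L^\infty_{x_3}$-type splittings. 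For each nonlinearity -- the four families $\int\p^\alpha(u\cdot\na u)\cdot\p^\alpha u$, $\int\p^\alpha(b\cdot\na b)\cdot\p^\alpha u$, $\int\p^\alpha(u\cdot\na b)\cdot\p^\alpha b$, $\int\p^\alpha(b\cdot\na u)\cdot\p^\alpha b$ and their analogues in $dG/dt$ -- one expands by Leibniz and arranges that in every product at least one factor carries a horizontal derivative of $u$, or a $\p_3$ of $b$, or a $\p_1$ of $b$, so that the bound lands in $\mathcal{E}^{1/2}\mathcal{D}$. The terms in $dG/dt$ containing factors of $\p_1 b$ or $\p_1\p_3^2 b$ are the most delicate, but the freshly available $\|\p_1 b\|_{H^2}^2$ on the dissipation side can absorb them after a Young split. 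Once all nonlinearities are bounded in this form, smallness of $\mathcal{E}^{1/2}$ closes the bootstrap, and uniqueness follows by a standard energy argument in $L^2$ since the solutions lie in $H^3\hookrightarrow W^{1,\infty}$.
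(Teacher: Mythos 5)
Your overall architecture matches the paper's: a basic $H^3$ energy estimate yielding $\|\na_h u\|_{H^3}^2+\|\p_3 b\|_{H^3}^2$, plus an auxiliary cross functional to extract $\|\p_1 b\|_{H^2}^2$ (your $G(t)=\sum_{|\alpha|\le2}\int\p^\alpha u\cdot\p^\alpha\p_1 b\,dx$ is exactly the quantity whose time derivative the paper computes via the terms $N_{1,0}$, $M_{1,0}$ after writing $\p_1 b=\p_t u+u\cdot\na u-\Dd_h u+\na P-b\cdot\na b$), closed by a continuity/bootstrap argument. However, there is a genuine gap in the step where you claim that every nonlinear term, after Leibniz expansion, ``arranges that in every product at least one factor carries a horizontal derivative of $u$, or a $\p_3$ of $b$, or a $\p_1$ of $b$, so that the bound lands in $\mathcal{E}^{1/2}\mathcal{D}$.'' This fails for the terms
\begin{equation*}
\int_{\R^3}\p_1 u_1\,\p_2^3 b_1\,\p_2^3 b_3\,dx
\qquad\text{and}\qquad
\int_{\R^3}\p_1 u_3\,\p_2^3 b_1\,\p_2^3 b_3\,dx,
\end{equation*}
which arise from $\int\p_2^3(u\cdot\na b)\cdot\p_2^3 b\,dx$ (via $\p_2u_2=-\p_1u_1-\p_3u_3$) and from $\int\p_2^3(b\cdot\na u)\cdot\p_2^3 b\,dx$. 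Here the criterion ``at least one good factor'' is not enough: the anisotropic triple-product inequality requires distributing one half-derivative in \emph{each} of the three directions $x_1,x_2,x_3$ over the three factors. Placing $\p_2$ on $\p_1 u$ and $\p_3$ on one copy of $\p_2^3 b$ is fine, but the remaining factor must then absorb $\p_1$, producing $\|\p_1\p_2^3 b\|_{L^2}^{1/2}$, i.e.\ $\|\p_1 b\|_{\dot H^3}$, which is \emph{not} controlled by the $\|\p_1 b\|_{H^2}^2$ dissipation you have generated (and putting $\p_2$ on $\p_2^3b$ instead requires $H^4$ regularity). No admissible assignment works, so these terms cannot be bounded by $\mathcal{E}^{1/2}\mathcal{D}$.

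The paper's resolution --- which it flags as the main new difficulty --- is to exploit the structure of the induction equation and substitute
\begin{equation*}
\p_1 u=\p_t b+u\cdot\na b-\p_3^2 b-b\cdot\na u
\end{equation*}
into these two integrals. This generates, among other things, total time derivatives such as $\frac{d}{dt}\int b_1|\p_2^3 b|^2\,dx$, which after integration in time contribute cubic boundary terms of size $\sup_\tau\|b(\tau)\|_{H^3}^3$; consequently the closed inequality is not of your form $\mathcal{E}(t)+\int_0^t\mathcal{D}\lesssim\mathcal{E}(0)+\int_0^t\mathcal{E}^{1/2}\mathcal{D}$ but necessarily contains additional terms $E_0(0)^{3/2}$, $E_0(t)^{3/2}$, $E_1(t)^{3/2}$, $E_0(t)^2$, $E_1(t)^2$, and the bootstrap must be run on that weaker inequality. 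Without this substitution step your scheme does not close, so the proposal as written is incomplete at its central point.
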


This new result constitutes an important contribution to the stability problem on the MHD equations. Prior to this stability result, we only know the stability of the background magnetic field for two cases, the ideal MHD equations and the MHD equations with kinematic dissipation and no magnetic diffusion. The nonlinear stability for the ideal MHD equations was established in several beautiful
papers \cite{BSS, CaiLei, HeXuYu, PanZhouZhu, WeiZ}. The stability problem for the MHD equations with no magnetic diffusion was first studied in \cite{LinZhang1}, which inspired many further investigations.  The stability has now been successfully established by several authors via different approaches (see, e.g., \cite{AZ, Deng, HuX, HuLin, HuWang, LinZhang1, PanZhouZhu, Ren, Ren2, Tan, WuWu, WuWuXu, TZhang}). To give a more complete view of current studies on the stability and the global regularity problems, we also mention some of the other exciting results in
\cite{CaoWuYuan, DongLiWu1, DongLiWu0, Fefferman1,Fefferman2,FNZ,HuangLi, JNW,JiuZhao2,WuMHD2018,
	Yam1,Yam2,Yam3,Yam4,Ye, YuanZhao, ZhouZhu} and the references therein.

\vskip .1in
A special consequence of Theorem \ref{good} and its proof is the stability or small data global well-posedness of the 3D Navier-Stokes equations with only horizontal dissipation. It is not clear if the stability for the 3D Navier-Stokes still holds if there is only one directional dissipation (say, in $x_1$ or $x_2$ direction, but not both). The 3D Navier-Stokes equations with full dissipation have small data global wellposedness while the 3D incompressible Euler equations
are ill-posed and have norm inflation in any Sobolev space $H^k$ or $C^k$ for any positive integer $k$ \cite{Bou, Bou1,Tarek}.

\vskip .1in
The proof of Theorem \ref{good} is not trivial. A natural starting point is to bound
$\|u(t)\|_{H^3} + \|b(t)\|_{H^3}$ via the energy estimates. However, due to the lack of the vertical dissipation and the horizontal magnetic diffusion, some of the nonlinear terms can not be controlled in terms of $\|u(t)\|_{H^3} + \|b(t)\|_{H^3}$ or the dissipative parts $\|\na_h u\|_{H^3}$ and $\|\p_3 b\|_{H^3}$. Consequently we are not able to obtain a closed differential inequality for
\begin{equation}\nonumber
E_0(t) = \sup_{ 0 \leq \tau \leq t} \Big\{ \|u(\tau)\|_{H^3}^2 + \|b(\tau)\|_{H^3}^2 \Big\} + 2 \int_0^t \|\na_h u(\tau)\|_{H^3}^2 + \|\partial_3 b(\tau)\|_{H^3}^2 \; d \tau.
\end{equation}
This forces us to include suitable extra terms in the energy estimates. We discover that the following term
\begin{equation}\nonumber
E_1(t) = \int_0^t \|\partial_1 b(\tau)\|_{H^2}^2  \; d \tau
\end{equation}
serves our purpose perfectly. All nonlinear terms involved in the estimates of $E_0(t)$ can be bounded in terms of $E_0(t)$ and $E_1(t)$. The selection of this term is based on the structure of (\ref{mhd}) and through trial and error. We remark that the process of estimating $E_0(t)$ involves many terms and is very lengthy. Even with the combination of $E_0(t)$ and $E_1(t)$, it is still very difficulty to directly bound some of the nonlinear terms. Two of the most difficult ones are
\beq \label{aa}
\int_{\mathbb{R}^3} \partial_1 u_1\, \partial_2^3 b_1\,  \partial_2^3 b_3 \; dx \quad
\mbox{and} \quad \int_{\mathbb{R}^3} \partial_1 u_3\, \partial_2^3 b_1 \, \partial_2^3 b_3 \; dx.
\eeq
It does not appear  possible to bound them directly in terms of $E_0(t)$ and $E_1(t)$.  Our strategy is to make use of the special structure of the equation for $b$ in
(\ref{mhd}) and replace $\p_1u_1$ and $\p_1 u_3$ in (\ref{aa}) via the equation of $b$,
\beq\label{bb}
\p_1 u = \p_t b + u\cdot\na b -\p_3^2 b - b\cdot\na u.
\eeq
Substituting (\ref{bb}) in (\ref{aa}) generates more terms, but fortunately all the resulting terms can be bounded suitably by $E_0(t)$ and $E_1(t)$.

\vskip .1in
In addition, in order to make most efficient usage of the anisotropic dissipation, we employ extensively the following anisotropic bounds in the estimates of the nonlinear terms. These anisotropic bounds are extremely powerful in the study
of global regularity and stability problems on partial differential equations
with only partial dissipation. Similar inequalities have previously been used in the investigation of partially dissipated 2D MHD systems and related equations (see, e.g., \cite{CaoReWu,CaoWu}).

\begin{lemma}\label{abound}
	The following estimates hold when the right-hand sides are all bounded.
	\begin{equation}\nonumber
	\begin{split}
	& \int_{\mathbb{R}^3} |f g h| dx \lesssim \|f\|_{L^2}^{\frac{1}{2}} \|\partial_1 f\|_{L^2}^{\frac{1}{2}}\|g\|_{L^2}^{\frac{1}{2}} \|\partial_2 g\|_{L^2}^{\frac{1}{2}}\|h\|_{L^2}^{\frac{1}{2}} \|\partial_3 h\|_{L^2}^{\frac{1}{2}},\\
	&\int_{\mathbb{R}^3} |f g h v| dx \lesssim  \|f\|_{L^2}^{\frac{1}{4}} \|\partial_1 f\|_{L^2}^{\frac{1}{4}} \|\partial_2 f\|_{L^2}^{\frac{1}{4}} \|\partial_1 \partial_2 f\|_{L^2}^{\frac{1}{4}} \|g\|_{L^2}^{\frac{1}{4}} \|\partial_1 g\|_{L^2}^{\frac{1}{4}} \|\partial_2 g\|_{L^2}^{\frac{1}{4}} \|\partial_1 \partial_2 g\|_{L^2}^{\frac{1}{4}} \\
	& \qquad \qquad \qquad \quad \cdot \|h\|_{L^2}^{\frac{1}{2}} \|\partial_3 h\|_{L^2}^{\frac{1}{2}} \|v\|_{L^2}^{\frac{1}{2}} \|\partial_3 v\|_{L^2}^{\frac{1}{2}},\\
	&\left(\int_{\mathbb{R}^3} |f g h|^2 dx\right)^\frac{1}{2} \lesssim  \|f\|_{L^2}^{\frac{1}{4}} \|\partial_1 f\|_{L^2}^{\frac{1}{4}} \|\partial_2 f\|_{L^2}^{\frac{1}{4}} \|\partial_1 \partial_2 f\|_{L^2}^{\frac{1}{4}} \|g\|_{L^2}^{\frac{1}{2}} \|\partial_3 g\|_{L^2}^{\frac{1}{2}} \|h\|_{H^2},\\
	&\int_{\mathbb{R}^3} |f g h| dx\lesssim \|f\|_{L^2}^{\frac{1}{4}} \|\partial_1 f\|_{L^2}^{\frac{1}{4}} \|\partial_2 f\|_{L^2}^{\frac{1}{4}} \|\partial_1 \partial_2 f\|_{L^2}^{\frac{1}{4}} \|g\|_{L^2}^{\frac{1}{2}} \|\partial_3 g\|_{L^2}^{\frac{1}{2}} \|h\|_{L^2}.
	\end{split}
	\end{equation}
\end{lemma}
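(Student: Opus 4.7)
My plan is to derive all four bounds from two elementary ingredients: the one-dimensional Sobolev inequality
\begin{equation*}
\|\varphi\|_{L^\infty(\mathbb{R})}\le \sqrt{2}\,\|\varphi\|_{L^2(\mathbb{R})}^{1/2}\|\varphi'\|_{L^2(\mathbb{R})}^{1/2},
\end{equation*}
and its two-dimensional iterated counterpart
\begin{equation*}
\|\psi\|_{L^\infty(\mathbb{R}^2)}\lesssim \|\psi\|_{L^2}^{1/4}\|\partial_1\psi\|_{L^2}^{1/4}\|\partial_2\psi\|_{L^2}^{1/4}\|\partial_1\partial_2\psi\|_{L^2}^{1/4},
\end{equation*}
which follows from applying the 1D estimate first in $x_1$ (to $\psi$ and to $\partial_2\psi$) and then once more in $x_2$. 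Beyond these embeddings, the proofs amount to a systematic chain of H\"older estimates.

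For the first inequality I would majorise $|f|,|g|,|h|$ inside the integral by the \emph{slice} sup norms $F(x_2,x_3):=\|f(\cdot,x_2,x_3)\|_{L^\infty_{x_1}}$, $G(x_1,x_3):=\|g(x_1,\cdot,x_3)\|_{L^\infty_{x_2}}$, $H(x_1,x_2):=\|h(x_1,x_2,\cdot)\|_{L^\infty_{x_3}}$, each depending on only two variables. Three successive Cauchy--Schwarz applications, in $x_1$, then $x_2$, then $x_3$, reduce the integral of $FGH$ to the product $\|F\|_{L^2_{x_2,x_3}}\|G\|_{L^2_{x_1,x_3}}\|H\|_{L^2_{x_1,x_2}}$. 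The 1D embedding then yields, for instance, $\|F\|_{L^2_{x_2,x_3}}^2\le 2\|f\|_{L^2}\|\partial_1 f\|_{L^2}$, and the analogues for $G,H$ give precisely the stated product of six factors.

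The third and fourth inequalities share a single intermediate step. For the fourth, I would use H\"older to reduce to $\int|fgh|\,dx\le \|fg\|_{L^2}\|h\|_{L^2}$, and then control $\|fg\|_{L^2}^2$ by freezing $x_3$: bound $|f(\cdot,\cdot,x_3)|$ by its $L^\infty_{x_1,x_2}$ norm via the 2D embedding, estimate $\sup_{x_3}\|g(\cdot,\cdot,x_3)\|_{L^2_{x_1,x_2}}^2\lesssim \|g\|_{L^2}\|\partial_3 g\|_{L^2}$ via the 1D embedding in $x_3$, and finally apply H\"older with exponent $4$ in the $x_3$-integration to collapse the four slice-wise factors from the 2D embedding into full $L^2(\mathbb{R}^3)$ norms. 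The third inequality follows from the same $\|fg\|_{L^2}$ bound combined with the elementary embedding $\|h\|_{L^\infty(\mathbb{R}^3)}\lesssim\|h\|_{H^2}$.

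Finally, for the second inequality I would majorise $|f|,|g|$ by their $L^\infty_{x_1,x_2}$ slice norms (which depend only on $x_3$) and $|h|,|v|$ by their $L^\infty_{x_3}$ slice norms (which depend only on $x_1,x_2$), so that the integral factors as
\begin{equation*}
\left(\int_{\mathbb{R}} F(x_3)G(x_3)\,dx_3\right)\left(\int_{\mathbb{R}^2} H(x_1,x_2)V(x_1,x_2)\,dx_1dx_2\right);
\end{equation*}
Cauchy--Schwarz on each factor plus the two baseline embeddings (again with a fourth-power H\"older in $x_3$ to handle $F,G$) produces the prescribed product. There is no real analytic obstacle anywhere in the argument; the only delicate point is the bookkeeping---choosing the slice direction for each factor, and the exponent $4$ in the $x_3$-H\"older step, so that the exponents on the right-hand sides come out exactly as stated.
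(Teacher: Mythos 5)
Your proposal is correct and follows essentially the same route as the paper: both reduce each factor to a one-dimensional Agmon-type bound $\|\varphi\|_{L^\infty}\le \sqrt{2}\,\|\varphi\|_{L^2}^{1/2}\|\varphi'\|_{L^2}^{1/2}$ on slices (iterated twice for the $L^\infty_{x_1,x_2}$ factors) and then combine the slice norms by H\"older in the remaining variables, with the same assignment of directions to $f$, $g$, $h$, $v$ in each inequality. The only cosmetic difference is that you organize the first step as a Gagliardo-style chain of successive Cauchy--Schwarz estimates in $x_1$, $x_2$, $x_3$, whereas the paper states it as a mixed-norm H\"older inequality followed by Minkowski's inequality to reorder the norms; these are interchangeable.
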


\vskip .1in
Combining all aforementioned ingredients, we are able to drive the following energy inequalities
\begin{equation}\label{E0}
E_0(t) \lesssim E_0(0)+ E_0(0)^{\frac32} + E_0(t)^{\frac{3}{2}} + E_1(t)^{\frac{3}{2}} + E_0(t)^2 + E_1(t)^2
\end{equation}
and
\begin{equation}\label{E1}
E_1(t) \lesssim  E_0(0) + E_0(t) + E_0(t)^\frac{3}{2} + E_1(t)^\frac{3}{2}.
\end{equation}
These inequalities, combined with the bootstrapping argument, allow us to prove  Theorem \ref{good}.

\vskip .1in
The rest of this paper is divided into three sections. Section \ref{sec1} provides the proofs of Theorem \ref{good} and of Lemma \ref{abound}.  Section \ref{sec2} derives the energy inequality (\ref{E0}) while Section \ref{sec3} proves (\ref{E1}).

\vskip .3in
\section{Proofs of Theorem \ref{good} and Lemma \ref{abound}}
\label{sec1}
\setcounter {equation}{0}

This section proves Theorem \ref{good} and
Lemma \ref{abound}.

\vskip .1in
\begin{proof}[Proof of Theorem \ref{good}] We employ the bootstrapping argument (see, e.g., \cite[p.20]{Tao}). It follows from (\ref{E0}) and (\ref{E1}) that
\beno
E_0(t) + E_1(t) \lesssim E_0(0)+ E_0(0)^{\frac32} + E_0(t)^{\frac{3}{2}} + E_1(t)^{\frac{3}{2}} + E_0(t)^2 + E_1(t)^2
\eeno 	
or, for some pure constants $C_0$, $C_1$ and $C_2$,
\ben
E_0(t) + E_1(t) &\le& C_0 (E_0(0)+ E_0(0)^{\frac32}) + C_1 (E_0(t)^{\frac{3}{2}}+ E_1(t)^{\frac{3}{2}}) \notag\\
&&  + \,C_2 (E_0(t)^2 + E_1(t)^2).  \label{kkk}
\een	
To initiate the bootstrapping argument, we make the ansatz
\beq \label{ss}
E_0(t) + E_1(t) \le \min\left\{\frac{1}{16  C_1^2}, \,\frac1{4C_2}\right\}.
\eeq
We then show that (\ref{kkk}) allows us to conclude that $E_0(t) + E_1(t)$ actually
admits an even smaller bound by taking the initial $H^3$-norm $E_0(0)$ sufficiently small. In fact, when (\ref{ss}) holds, (\ref{kkk}) implies
\beno
E_0(t) + E_1(t) &\le& C_0 (E_0(0)+ E_0(0)^{\frac32}) + C_1\sqrt{E_0 +E_1}(E_0(t) + E_1(t)) \\
&& +\, C_2 (E_0(t) + E_1(t))(E_0(t) + E_1(t))\\
&\le& C_0 (E_0(0)+ E_0(0)^{\frac32}) + \frac12 (E_0(t) + E_1(t))
\eeno
or
\beq\label{ppp}
E_0(t) + E_1(t) \le 2 C_0 (E_0(0)+ E_0(0)^{\frac32}).
\eeq
Therefore, if we take $E_0(0)$ sufficiently small such that
\beq \label{mmm}
2 C_0 (E_0(0)+ E_0(0)^{\frac32}) < \min\left\{\frac{1}{16  C_1^2}, \,\frac1{4C_2}\right\},
\eeq
then $E_0(t) + E_1(t)$ actually
admits an smaller bound in (\ref{ppp}) than the one in the ansatz (\ref{ss}). The bootstrapping argument then assesses that (\ref{ppp}) holds for all time when
$E_0(0)$ obeys (\ref{mmm}).
This completes the proof.
\end{proof}

\vskip .1in
Next we prove Lemma \ref{abound}. A simple fact to be used in the proof is
the following version of Minkowski's inequality, for any $1\le q\le p\le \infty$,
$$
\| \|f\|_{L^q_y(\mathbb{R}^n)} \|_{L^p_x(\mathbb{R}^m)} \le \| \|f\|_{L^p_x(\mathbb{R}^m)} \|_{L^q_y(\mathbb{R}^n)},
$$
where  $f=f(x,y)$ with $x\in \mathbb{R}^m $ and $y\in \mathbb{R}^n$ is a measurable function on $\mathbb{R}^m\times \mathbb{R}^n$. A more general version of Minkowski's inequality and its proof can be found in \cite{Lieb}.

\begin{proof}[Proof of Lemma \ref{abound}] \quad The proof makes use of the following basic one-dimensional inequality, for $f\in H^1(\mathbb R)$,
	\beq \label{j9}
	\|f\|_{L^\infty(\mathbb R)} \le \sqrt{2}\, \|f\|^{\frac12}_{L^2(\mathbb R)} \, \|f'\|^{\frac12}_{L^2(\mathbb R)}.
	\eeq
	By H\"{o}lder's inequality and Minkowski's inequality,
	\beno
	\int_{\mathbb{R}^3} |f g h|\,dx &\le& \|f\|_{L^2_{x_3}L^2_{x_2}L^\infty_{x_1}}\, \|g\|_{L^2_{x_3} L^\infty_{x_2} L^2_{x_1}}\, \|h\|_{L^\infty_{x_3}L^2_{x_2}L^2_{x_1}} \\
	&\le&  \|f\|_{L^2_{x_3}L^2_{x_2}L^\infty_{x_1}}\, \|g\|_{L^2_{x_3}  L^2_{x_1}L^\infty_{x_2}}\, \|h\|_{L^2_{x_2}L^2_{x_1}L^\infty_{x_3}}
	\\
	&\le& 2^{\frac32} \, \left\|\|f\|_{L^2_{x_1}}^{\frac12} \, \|\p_1 f\|_{L^2_{x_1}}^{\frac12}\right\|_{L^2_{x_2x_3}}\, \left\|\|g\|_{L^2_{x_2}}^{\frac12} \, \|\p_2 g\|_{L^2_{x_2}}^{\frac12}\right\|_{L^2_{x_1x_3}}\,\\ && \times \left\|\|h\|_{L^2_{x_3}}^{\frac12} \, \|\p_3 h\|_{L^2_{x_3}}^{\frac12}\right\|_{L^2_{x_1x_2}}\,\\
	&\le&  2^{\frac32} \, \|f\|_{L^2}^{\frac12}\,\|\p_1f\|_{L^2}^{\frac12}\, \|g\|_{L^2}^{\frac12}\, \|\p_2 g\|_{L^2}^{\frac12}\, \|h\|_{L^2}^{\frac12}\, \|\p_3 h\|_{L^2}^{\frac12}.
	\eeno
	Here $\|f\|_{L^2_{x_3}L^2_{x_2}L^\infty_{x_1}}$ represents the $L^\infty$-norm in the $x_1$-variable, followed by the $L^2$-norm in $x_2$ and the $L^2$-norm in $x_3$. This finishes the proof of the first inequality. The proof of the second inequality is very similar. In fact, by H\"{o}lder's inequality and Minkowski's inequality,
	\beno
	\int |f g h v|\,dx \le \|f\|_{L^2_{x_3} L^\infty_{x_1} L^\infty_{x_2} }\, \|g\|_{L^2_{x_3}L^\infty_{x_1} L^\infty_{x_2} }\, \|h\|_{L^2_{x_1x_2}  L^\infty_{x_3}}\,  \|v\|_{L^2_{x_1x_2}  L^\infty_{x_3}}.
	\eeno
	By (\ref{j9}) and H\"{o}lder's inequality,
	\beno
	\|f\|_{L^2_{x_3} L^\infty_{x_1} L^\infty_{x_2}} &\le& 2^{\frac12}\,  \left\|\|f\|_{L^2_{x_2}}^{\frac12} \, \|\p_1 f\|_{L^2_{x_2}}^{\frac12}\right\|_{L^2_{x_3} L^\infty_{x_1} } \\
	&\le& 2^{\frac12}\, \left\|\|f\|_{L^\infty_{x_1}}\right\|^{\frac12}_{L^2_{x_2x_3}}
	\,\left\|\|\p_2 f\|_{L^\infty_{x_1}}\right\|^{\frac12}_{L^2_{x_2x_3}} \\
	&\le& 2^{\frac32}\,  \|f\|_{L^2}^{\frac14}\, \|\p_1 f\|_{L^2}^{\frac14}\,
	\|\p_2 f\|_{L^2}^{\frac14}\,  \|\p_1\p_2 f\|_{L^2}^{\frac14}.
	\eeno
	$\|g\|_{L^2_{x_3} L^\infty_{x_1}L^\infty_{x_2} }$ obeys a similar bound. $\|h\|_{L^2_{x_1x_2}  L^\infty_{x_3}}$ and   $\|v\|_{L^2_{x_1x_2}  L^\infty_{x_3}}$
	can be estimated as in the proof of the first inequality. Combining all these estimates leads to the desired second inequality in Lemma \ref{abound}. The other two
	inequalities are obtained similarly. This completes the
	proof of Lemma \ref{abound}.
\end{proof}

\vskip .3in
\section{Proof of (\ref{E0})}
\label{sec2}
\setcounter {equation}{0}

\vskip .1in
This section  proves (\ref{E0}), namely
$$
E_0(t) \lesssim  E_0(0)+ E_0(0)^{\frac32} + E_0(t)^{\frac{3}{2}} + E_1(t)^{\frac{3}{2}} + E_0(t)^2 + E_1(t)^2.
$$
The proof of this inequality is very lengthy and involves the estimates of many terms.

\begin{proof}[Proof of (\ref{E0})]\quad Due to the equivalence of $\|(u, b)\|_{H^3}$ with $\|(u, b)\|_{L^2} + \|(u, b)\|_{\dot{H}^3}$, it suffices to bound the $L^2$ and the homogeneous $\dot{H}^3$-norm of $(u,b)$. By a simple energy estimate and $\na\cdot u=\na\cdot b=0$, we find that
the $L^2$-norm  of $(u, b)$ obeys
$$
\|u(t)\|_{L^2}^2 + \|b(t)\|_{L^2}^2 + 2 \int_0^t \|\na_h u(\tau)\|_{L^2}^2 +
\|\p_3 b(\tau)\|_{L^2}^2\,d\tau  = \|u(0)\|_{L^2}^2 + \|b(0)\|_{L^2}^2.
$$
The rest of the proof focuses on the $\dot H^3$ norm. Applying $\partial_i^3$ $(i=1,2,3)$ to (\ref{mhd}) and then dotting by $(\partial_i^3 u, \partial_i^3 b)$, we find
\begin{equation}\label{E0:eq2}
\frac{1}{2} \frac{d}{dt} \sum_{i = 1}^3(\|\partial_i^3 u\|_{L^2}^2 + \|\partial_i^3 b\|_{L^2}^2) + \|\partial_i^3 \na_h u\|_{L^2}^2 + \|\partial_i^3 \partial_3 b\|_{L^2}^2
 = I_1 + I_2 + I_3 + I_4 + I_5,
\end{equation}
where
\begin{equation}\nonumber
\begin{split}
I_1 =& \sum_{i = 1}^3 \int_{\mathbb{R}^3} \partial_i^3 \partial_1 b \cdot \partial_i^3 u+ \partial_i^3 \partial_1 u \cdot\partial_i^3 b\; dx,\\
I_2 =& - \sum_{i = 1}^3\int_{\mathbb{R}^3} \partial_i^3 (u\cdot \nabla u)\cdot \partial_i^3 u \; dx,\\
I_3 =&  \sum_{i = 1}^3\int_{\mathbb{R}^3} [\partial_i^3 (b \cdot \nabla b) - b \cdot \nabla \partial_i^3 b]\cdot\partial_i^3 u \; dx,\\
I_4 =& - \sum_{i = 1}^3\int_{\mathbb{R}^3} \partial_i^3 (u \cdot \nabla b)\cdot \partial_i^3 b \; dx,\\
I_5 =& \sum_{i = 1}^3\int_{\mathbb{R}^3} [\partial_i^3 (b \cdot \nabla u) - b \cdot \nabla \partial_i^3 u]\cdot\partial_i^3 b \; dx.
\end{split}
\end{equation}
By integration by parts,  $I_1 = 0$.  To bound $I_2$, we decompose it into two pieces,
\begin{equation}\nonumber
\begin{split}
I_2 =& - \sum_{i = 1}^2\int_{\mathbb{R}^3}  \partial_i^3 (u\cdot \nabla u)\cdot \partial_i^3 u \; dx - \int_{\mathbb{R}^3} \partial_3^3 (u\cdot \nabla u)\cdot \partial_3^3 u \; dx \\
=&  \quad I_{2,1} + I_{2,2} .
\end{split}
\end{equation}
By H\"{o}lder's inequality,
\begin{equation}\label{jj}
\begin{split}
I_{2,1} \lesssim & \|\na_h (u \cdot \nabla u)\|_{\dot H^2} \|\na_h u\|_{\dot H^3}
\lesssim \|u\|_{H^3} \|\na_h u\|_{H^3}^2.
\end{split}
\end{equation}
By H\"{o}lder's inequality and Lemma \ref{abound},
\ben
I_{2,2} &=&- \int_{\mathbb{R}^3} \partial_3^3 (u_h \cdot \nabla_h u) \cdot \partial_3^3 u \;dx - \int_{\mathbb{R}^3} \partial_3^3 (u_3 \partial_3 u) \cdot \partial_3^3 u \;dx\nonumber\\
&= &- \sum_{k = 1}^3 \mathcal{C}_{3}^k\int_{\mathbb{R}^3} \partial_3^k u_h \cdot \nabla_h \partial_3^{3-k} u  \cdot \partial_3^3 u  + \partial_3^k u_3 \partial_3^{4-k} u \cdot \partial_3^3 u \;dx\nonumber\\
&\lesssim & \sum_{k = 1}^3 \|\partial_3^k u_h\|_{L^2}^\frac{1}{2} \|{\bf \partial_1} \partial_3^k u_h\|_{L^2}^\frac{1}{2} \|\nabla_h \partial_3^{3-k} u\|_{L^2}^\frac{1}{2}
\|{\bf \partial_3} \nabla_h \partial_3^{3-k} u\|_{L^2}^\frac{1}{2} \|\partial_3^3 u\|_{L^2}^\frac{1}{2} \|{\bf \partial_2} \partial_3^3 u\|_{L^2}^\frac{1}{2}\nonumber\\
&& + \; \|\partial_3^{k-1} \nabla_h \cdot u_h\|_{L^2}^\frac{1}{2}\|{\bf \partial_3} \partial_3^{k-1} \nabla_h \cdot u_h\|_{L^2}^\frac{1}{2} \|\partial_3^{4-k} u\|_{L^2}^\frac{1}{2}
\|{\bf \partial_1} \partial_3^{4-k} u\|_{L^2}^\frac{1}{2} \notag\\
&& \qquad \times \| \partial_3^3 u\|_{L^2}^\frac{1}{2} \|{\bf \partial_2} \partial_3^3 u\|_{L^2}^\frac{1}{2}\nonumber\\
&\lesssim & \|u\|_{H^3} \|\na_h u\|_{H^3}^2. \label{jj1}
\een
Now we turn to the next term  $I_3$,
\begin{equation}\nonumber
\begin{split}
I_3 =& \sum_{i = 1}^3 \sum_{k = 1}^3 \mathcal{C}_{3}^k\int_{\mathbb{R}^3} \partial_i^k b \cdot \nabla \partial_i^{3-k} b \cdot\partial_i^3 u \; dx = I_{3,1} + I_{3,2}+I_{3,3}.
\end{split}
\end{equation}
By H\"{o}lder's and Sobolev's inequalities,
\ben
I_{3,1} &\le & \Big( \|\partial_1 b\|_{L^3}  \|\nabla \partial_1^2 b\|_{L^2} + \|\partial_1^2 b\|_{L^3}  \|\nabla \partial_1 b\|_{L^2} + \|\partial_1^3 b\|_{L^2}  \|\nabla  b\|_{L^3} \Big)  \|\partial_1^3 u \|_{L^6} \notag\\
&\lesssim& \|b\|_{H^3} \|\partial_1 b\|_{H^2} \|\partial_1 u \|_{H^3}. \label{jj2}
\een
By Lemma \ref{abound},
\ben
I_{3,2} &\lesssim& \sum_{k = 1}^2 \|\partial_2^k b\|_{L^2}^\frac{1}{2} \|{\bf \partial_1} \partial_2^k b\|_{L^2}^\frac{1}{2}  \|\nabla \partial_2^{3-k} b\|_{L^2}^\frac{1}{2}
\|{\bf \partial_3} \nabla \partial_2^{3-k} b\|_{L^2}^\frac{1}{2} \|\partial_2^3 u \|_{L^2}^\frac{1}{2}\|{\bf \partial_2} \partial_2^3 u \|_{L^2}^\frac{1}{2} \notag\\
&& + \|\partial_2^3 b\|_{L^2}^\frac{1}{2} \|{\bf \partial_3} \partial_2^3 b\|_{L^2}^\frac{1}{2}  \|\nabla  b\|_{L^2}^\frac{1}{2}
\|{\bf \partial_1} \nabla  b\|_{L^2}^\frac{1}{2} \|\partial_2^3 u \|_{L^2}^\frac{1}{2}\|{\bf \partial_2} \partial_2^3 u \|_{L^2}^\frac{1}{2} \notag\\
&\lesssim& \|b\|_{H^3} \|\partial_1 b\|_{H^2}^\frac{1}{2} \|\partial_3 b\|_{H^3}^\frac{1}{2} \|\partial_2 u \|_{H^3} \label{jj3}
\een
and
\ben
I_{3,3} &\lesssim& \sum_{k = 1}^2 \|\partial_3^k b\|_{L^2}^\frac{1}{2} \|{\bf \partial_1} \partial_3^k b\|_{L^2}^\frac{1}{2}  \|\nabla \partial_3^{3-k} b\|_{L^2}^\frac{1}{2}
\|{\bf \partial_3} \nabla \partial_3^{3-k} b\|_{L^2}^\frac{1}{2} \|\partial_3^3 u \|_{L^2}^\frac{1}{2}\|{\bf \partial_2} \partial_3^3 u \|_{L^2}^\frac{1}{2} \notag\\
&& + \|\partial_3^3 b\|_{L^2}^\frac{1}{2} \|{\bf \partial_3} \partial_3^3 b\|_{L^2}^\frac{1}{2}  \|\nabla  b\|_{L^2}^\frac{1}{2}
\|{\bf \partial_1} \nabla  b\|_{L^2}^\frac{1}{2} \|\partial_3^3 u \|_{L^2}^\frac{1}{2}\|{\bf \partial_2} \partial_3^3 u \|_{L^2}^\frac{1}{2} \notag\\
&\lesssim& \|b\|_{H^3}^{\frac12}\, \|u\|_{H^3}^{\frac12}\,\|\partial_1 b\|_{H^2}^\frac{1}{2}\, \|\partial_3 b\|_{H^3} \|\partial_2 u \|_{H^3}^{\frac12}. \label{jj4}
\een
The next term $I_4$ is naturally split into three parts,
\begin{equation}\nonumber
I_{4} =  - \sum_{i = 1}^3\int_{\mathbb{R}^3} \partial_i^3 (u \cdot \nabla b) \partial_i^3 b \; dx = I_{4,1}+I_{4,2}+I_{4,3}.
\end{equation}
$I_{4,1}$ and $I_{4,3}$ involves the favorable partial derivatives in $x_1$ and $x_3$, respectively, and their handling is not very difficult. In contrast, $I_{4,2}$
has partials in terms of $x_2$ and the control of $I_{4,2}$ is extremely delicate. By Lemma \ref{abound},
\begin{equation}\nonumber
\begin{split}
I_{4, 1} =&  - \sum_{k = 1}^3 \mathcal{C}_{3}^k\int_{\mathbb{R}^3} \partial_1^k u \cdot \nabla \partial_1^{3-k} b\cdot \partial_1^3 b \; dx,\\
\lesssim& \; \|\partial_1 u\|_{L^\infty} \|\nabla \partial_1^2 b\|_{L^2} \|\partial_1^3 b\|_{L^2} + \sum_{k = 2}^3 \|\partial_1^k u\|_{L^6} \|\nabla \partial_1^{3-k} b\|_{L^3} \|\partial_1^3 b\|_{L^2}\\
\lesssim& \; \|b\|_{H^3} \|\partial_1 u\|_{H^3} \|\partial_1 b\|_{H^2}
\end{split}
\end{equation}
and
\ben
I_{4, 3} &=&  - \sum_{k = 1}^3 \mathcal{C}_{3}^k\int_{\mathbb{R}^3} \partial_3^k u \cdot \nabla \partial_3^{3-k} b \cdot\partial_3^3 b \; dx, \notag\\
&\lesssim& \sum_{k = 1}^3\|\partial_3^k u\|_{L^2}^\frac{1}{2} \|{\bf \partial_2} \partial_3^k u\|_{L^2}^\frac{1}{2} \|\nabla \partial_3^{3-k} b\|_{L^2}^\frac{1}{2}
\|{\bf \partial_3} \nabla \partial_3^{3-k} b\|_{L^2}^\frac{1}{2}\|\partial_3^3 b\|_{L^2}^\frac{1}{2} \|{\bf \partial_1} \partial_3^3 b\|_{L^2}^\frac{1}{2} \notag\\
&\lesssim & \|u\|_{H^3}^\frac{1}{2}\|b\|_{H^3}^\frac{1}{2}\|\partial_2 u\|_{H^3}^\frac{1}{2} \|\partial_3 b\|_{H^3}^\frac{3}{2}. \label{pp1}
\een
We now turn to $I_{4,2}$, one of the most difficult terms. We further decompose it into three terms,
\begin{equation}\nonumber
\begin{split}
I_{4, 2} =& - \int_{\mathbb{R}^3} \big( \partial_2^3 (u_1 \partial_1 b) + \partial_2^3 (u_2 \partial_2 b) + \partial_2^3 (u_3 \partial_3 b) \big) \cdot\partial_2^3 b \; dx \\
= & \; I_{4,2,1}+I_{4,2,2}+I_{4,2,3}.
\end{split}
\end{equation}
By Lemma \ref{abound},
\begin{equation}\nonumber
\begin{split}
I_{4, 2, 1} =& - \sum_{k = 1}^3 \mathcal{C}_{3}^k \int_{\mathbb{R}^3}   \partial_2^k u_1 \partial_1 \partial_2^{3-k}b \cdot\partial_2^3 b \; dx \\
\lesssim & \sum_{k = 2}^3 \|  \partial_2^k u_1 \|_{L^6}\|\partial_1 \partial_2^{3-k}b\|_{L^3} \|\partial_2^3 b\|_{L^2} + \|\partial_2 u_1\|_{L^\infty} \|\partial_1 \partial_2^2 b\|_{L^2} \|\partial_2^3 b\|_{L^2}\\
\lesssim & \; \|b\|_{H^3} \|\partial_2 u_1\|_{H^3} \|\partial_1 b\|_{H^2}
\end{split}
\end{equation}
and
\begin{equation}\nonumber
\begin{split}
I_{4, 2, 3} =& - \sum_{k = 1}^3 \mathcal{C}_{3}^k \int_{\mathbb{R}^3}   \partial_2^k u_3 \partial_3 \partial_2^{3-k}b \cdot\partial_2^3 b \; dx \\
\lesssim & \sum_{k = 1}^3 \|  \partial_2^k u_3 \|_{L^6}\|\partial_3 \partial_2^{3-k}b\|_{L^3} \|\partial_2^3 b\|_{L^2} \\
\lesssim & \; \|b\|_{H^3} \|\partial_2 u_3\|_{H^3} \|\partial_3 b\|_{H^3}.
\end{split}
\end{equation}
$I_{4,2,2}$ is much more challenging and we further break it down,
\begin{equation}\nonumber
\begin{split}
I_{4, 2, 2} =&- \sum_{k = 2}^3 \mathcal{C}_{3}^k \int_{\mathbb{R}^3}  \partial_2^k u_2 \partial_2^{4-k} b\cdot \partial_2^3 b \; dx - 3\int_{\mathbb{R}^3}  \partial_2 u_2 \partial_2^3 b \cdot\partial_2^3 b \; dx\\
= & - \sum_{k = 2}^3 \mathcal{C}_{3}^k \int_{\mathbb{R}^3}  \partial_2^k u_2 \partial_2^{4-k} b \cdot\partial_2^3 b \; dx + 3\int_{\mathbb{R}^3}  \partial_3 u_3 \partial_2^3 b \cdot\partial_2^3 b \; dx \\
&+ 3\int_{\mathbb{R}^3}  \partial_1 u_1 \partial_2^3 b \cdot\partial_2^3 b \; dx  = I_{4,2,2,1} + I_{4,2,2,2} + I_{4,2,2,3}.
\end{split}
\end{equation}
By Lemma \ref{abound},
\begin{equation}\nonumber
\begin{split}
I_{4, 2, 2, 1} \lesssim & \sum_{k = 2}^3 \|\partial_2^k u_2\|_{L^2}^\frac{1}{2} \|{\bf \partial_2}\partial_2^k u_2\|_{L^2}^\frac{1}{2} \|\partial_2^{4-k} b\|_{L^2}^\frac{1}{2} \|{\bf \partial_1} \partial_2^{4-k} b\|_{L^2}^\frac{1}{2} \|\partial_2^3 b\|_{L^2}^\frac{1}{2}\|{\bf \partial_3}\partial_2^3 b\|_{L^2}^\frac{1}{2}\\
 \lesssim & \; \|b\|_{H^3} \|\partial_2 u_2\|_{H^3} \|\partial_1 b\|_{H^2}^\frac{1}{2} \|\partial_3 b\|_{H^3}^\frac{1}{2}.
\end{split}
\end{equation}
By integration by parts and Lemma \ref{abound},
\begin{equation}\nonumber
\begin{split}
I_{4, 2, 2, 2} \lesssim &\; \|u_3\|_{L^2}^\frac{1}{4}\|{\bf \partial_1} u_3\|_{L^2}^\frac{1}{4}\|{\bf \partial_2} u_3\|_{L^2}^\frac{1}{4}\|{\bf \partial_1 \partial_2} u_3\|_{L^2}^\frac{1}{4} \| \partial_2^3 b \|_{L^2}^\frac{1}{2}\|{\bf \partial_3} \partial_2^3 b \|_{L^2}^\frac{1}{2}\|\partial_3 \partial_2^3 b \|_{L^2}\\
\lesssim &\; \|b\|_{H^3}^\frac{1}{2} \|u_3\|_{H^3}^\frac{1}{2}
\|\partial_3 b\|_{H^3}^\frac{3}{2} \|\partial_2 u_3\|_{H^3}^\frac{1}{2}.
\end{split}
\end{equation}
$I_{4,2,2,3}$ can not be directly estimated to yield a suitable bound. If we attempt to directly apply Lemma \ref{abound} as follows,
$$ \int_{\mathbb{R}^3} \partial_1 u_1 \partial_2^3 b  \partial_2^3 b \; dx \leq \|\partial_1 u_1\|_{L^2}^{\frac{1}{2}} \|\partial_1 \partial_2u_1\|_{L^2}^{\frac{1}{2}}\|\partial_2^3 b\|_{L^2}^{\frac{1}{2}}\|\partial_3\partial_2^3 b\|_{L^2}^{\frac{1}{2}}\|\partial_2^3 b\|_{L^2}^{\frac{1}{2}}\|\partial_1\partial_2^3 b\|_{L^2}^{\frac{1}{2}},$$
which involves $\|\partial_1 b\|_{H^3}$ and the differential inequality would not be closed. Fortunately the equations in (\ref{mhd}) has a special structure. The equation of $b$ allows us to replace $\p_1 u_1$ via
$$
\p_1 u = \p_t u + u\cdot\na b -\p_3^2 b - b\cdot\na u
$$
and bring us the hope of controlling $I_{4, 2, 2, 3}$ suitably. We write $I_{4, 2, 2, 3}$ as
\begin{equation}\nonumber
\begin{split}
I_{4, 2, 2, 3} =& \; 3 \int_{\mathbb{R}^3} \partial_1 u_1 \partial_2^3 b \cdot \partial_2^3 b \; dx \\
= &  \; 3 \int_{\mathbb{R}^3} [\partial_t b_1 + u\cdot \nabla b_1 - \partial_3^2 b_1 - b \cdot \nabla u_1] |\partial_2^3 b|^2 \; dx\\
= & \; J_1 + J_2 + J_3 + J_4.
\end{split}
\end{equation}
$J_2$, $J_3$ and $J_4$ are relatively easier to deal with. By Lemma \ref{abound},
\beno
J_2 &\lesssim & \|u\|_{L^2}^\frac{1}{4}\|{\bf \partial_1 } u\|_{L^2}^\frac{1}{4} \|{\bf \partial_2} u\|_{L^2}^\frac{1}{4} \|{\bf \partial_1 \partial_2} u\|_{L^2}^\frac{1}{4}
\|\nabla b\|_{L^2}^\frac{1}{4}\|{\bf \partial_1 } \nabla b\|_{L^2}^\frac{1}{4} \notag\\
&& \times \|{\bf \partial_2} \nabla b\|_{L^2}^\frac{1}{4} \|{\bf \partial_1 \partial_2} \nabla b\|_{L^2}^\frac{1}{4}  \|\partial_2^3 b\|_{L^2} \|{\bf \partial_3} \partial_2^3 b\|_{L^2} \notag\\
&\lesssim & \|u\|_{H^3}^\frac{1}{2}\|b\|_{H^3}^\frac{3}{2} \|\na_h u\|_{H^3}^\frac{1}{2} \|\partial_1 b\|_{H^2}^\frac{1}{2} \|\partial_3 b\|_{H^3}.
\notag
\eeno
By H\"{o}lder's inequality,
\begin{equation}\nonumber
\begin{split}
J_3 \lesssim \|\partial_3 b_1\|_{L^\infty} \|\partial_3 \partial_2^3 b\|_{L^2}  \|\partial_2^3 b\|_{L^2}\lesssim \|b\|_{H^3} \|\partial_3 b\|_{H^3}^2.
\end{split}
\end{equation}
Again by Lemma \ref{abound},
\ben
J_4 &\lesssim & \;\|\nabla u_1\|_{L^2}^\frac{1}{4}\|{\bf \partial_1 } \nabla u_1\|_{L^2}^\frac{1}{4} \|{\bf \partial_2} \nabla u_1\|_{L^2}^\frac{1}{4} \|{\bf \partial_1 \partial_2} \nabla u_1\|_{L^2}^\frac{1}{4} \notag\\
&&\times \|b\|_{L^2}^\frac{1}{4}\|{\bf \partial_1 } b\|_{L^2}^\frac{1}{4} \|{\bf \partial_2} b\|_{L^2}^\frac{1}{4} \|{\bf \partial_1 \partial_2} b \|_{L^2}^\frac{1}{4}\,
\|\partial_2^3 b\|_{L^2} \|{\bf \partial_3} \partial_2^3 b\|_{L^2} \notag\\
&\lesssim & \; \|u\|_{H^3}^\frac{1}{2} \|b\|_{H^3}^\frac{3}{2} \|\na_h u\|_{H^3}^\frac{1}{2} \|\partial_1 b\|_{H^2}^\frac{1}{2} \|\partial_3 b\|_{H^3}. \label{pp2}
\een
To deal with $J_1$, we rewrite it as
\ben
J_1  = 3 \frac{d}{dt}\int_{\mathbb{R}^3} b_1\, |\partial_2 ^3 b|^2 \;dx - 3 \int_{\mathbb{R}^3} b_1 \partial_t |\partial_2 ^3 b|^2 \;dx = J_{1,1} + J_{1,2}. \label{pp3}
\een
To estimate $J_{1,2}$, we use the equation of $b$ in (\ref{mhd}) again to write is as
\begin{equation}\nonumber
\begin{split}
J_{1,2}  =&  -6 \int_{\mathbb{R}^3} b_1 \partial_2^3 b \cdot\big( - \partial_2^3(u\cdot \nabla b) - \partial_2^3 \partial_3^2 b + \partial_2^3 (b \cdot \nabla u) + \partial_2^3 \partial_1 u \big)\;dx \\
= & \; J_{1,2,1}+J_{1,2,2} + J_{1,2,3}+J_{1,2,4}.
\end{split}
\end{equation}
By integrating by parts and applying Lemma \ref{abound}, we have
\begin{equation}\nonumber
\begin{split}
J_{1,2,1}  =&  \; 6 \sum_{k = 1}^3 \int_{\mathbb{R}^3} \mathcal{C}_{3}^k b_1 \partial_2^3 b \partial_2^k u\cdot \nabla \partial_2^{3-k} b\; dx - 3\int_{\mathbb{R}^3} u \cdot \nabla b_1 |\partial_2^3 b|^2 \; dx\\
\lesssim &\; \|b_1\|_{L^2}^\frac{1}{4} \|{\bf \partial_1} b_1\|_{L^2}^\frac{1}{4} \|{\bf \partial_2} b_1\|_{L^2}^\frac{1}{4}
\|{\bf \partial_1 \partial_2} b_1\|_{L^2}^\frac{1}{4}\\
&\; \times   \|\partial_2 u\|_{L^2}^\frac{1}{4} \|{\bf \partial_1} \partial_2 u\|_{L^2}^\frac{1}{4} \|{\bf \partial_2} \partial_2 u\|_{L^2}^\frac{1}{4}
\|{\bf \partial_1 \partial_2} \partial_2 u\|_{L^2}^\frac{1}{4} \\
&\; \times  \|\partial_2^3 b\|_{L^2}^\frac{1}{2}\|{\bf \partial_3} \partial_2^3 b \|_{L^2}^\frac{1}{2}\, \|\nabla \partial_2^2 b \|_{L^2}^\frac{1}{2}\|{\bf \partial_3} \nabla \partial_2^2 b \|_{L^2}^\frac{1}{2}\\
&+\sum_{k = 2}^3 \|b_1\|_{L^\infty}\|\partial_2^3 b\|_{L^2}^\frac{1}{2} \|{\bf \partial_3}\partial_2^3 b\|_{L^2}^\frac{1}{2} \|\partial_2^k u\|_{L^2}^\frac{1}{2} \|{\bf \partial_2} \partial_2^k u\|_{L^2}^\frac{1}{2} \\
&\;  \qquad \times\|\nabla \partial_2^{3-k} b\|_{L^2}^\frac{1}{2} \|{\bf \partial_1} \nabla \partial_2^{3-k} b\|_{L^2}^\frac{1}{2} + |J_2|\\
\lesssim & \; \|u\|_{H^3}^\frac{1}{2} \|b\|_{H^3}^\frac{3}{2} \|\na_h u\|_{H^3}^\frac{1}{2} \|\partial_1 b\|_{H^2}^\frac{1}{2}\|\partial_3 b\|_{H^3} +
\|b\|_{H^3}^2 \|\na_h u\|_{H^3} \|\partial_1 b\|_{H^2}^\frac{1}{2} \|\partial_3 b\|_{H^3}^\frac{1}{2}.
\end{split}
\end{equation}
By integration by parts and H\"{o}lder's inequality,
\begin{equation}\nonumber
\begin{split}
J_{1,2,2}  =&  \; 3 \int_{\mathbb{R}^3} \partial_3^2 b_1 |\partial_2^3 b|^2 \;dx - 6 \int_{\mathbb{R}^3}  b_1 |\partial_3 \partial_2^3 b|^2 \;dx \\
\lesssim & \; |J_3| + \|b_1\|_{L^\infty} \|\partial_3 b\|_{H^3}^2 \\
\lesssim &  \; \|b\|_{H^3} \|\partial_3 b\|_{H^3}^2.
\end{split}
\end{equation}
By Lemma \ref{abound},
\begin{equation}\nonumber
\begin{split}
J_{1,2,3}  =&  -6 \sum_{k = 0}^3 \int_{\mathbb{R}^3} \mathcal{C}_{3}^k b_1 \partial_2^3 b \cdot \partial_2^k b\cdot \nabla \partial_2^{3-k} u\; dx \\
= & -6 \sum_{k = 1}^2 \int_{\mathbb{R}^3} \mathcal{C}_{3}^k b_1 \partial_2^3 b \partial_2^k b\cdot \nabla \partial_2^{3-k} u\; dx   -6  \int_{\mathbb{R}^3} b_1 \partial_2^3 b \partial_2^3 b\cdot \nabla u\; dx\\
&\;   -6  \int_{\mathbb{R}^3}b_1 \partial_2^3 b  b\cdot \nabla \partial_2^3 u\; dx \\
\lesssim &\; \sum_{k = 1}^2 \|b_1\|_{L^\infty} \|\partial_2^3 b\|_{L^2}^\frac{1}{2} \|{\bf \partial_3} \partial_2^3 b\|_{L^2}^\frac{1}{2} \|\partial_2^k b\|_{L^2}^\frac{1}{2}
\|{\bf \partial_1} \partial_2^k b\|_{L^2}^\frac{1}{2} \|\nabla \partial_2^{3-k} u\|_{L^2}^\frac{1}{2} \|{\bf \partial_2}\nabla \partial_2^{3-k} u\|_{L^2}^\frac{1}{2} \\
& + \|b_1\|_{L^2}^\frac{1}{4}\|{\bf \partial_1}b_1\|_{L^2}^\frac{1}{4}\|{\bf \partial_2}b_1\|_{L^2}^\frac{1}{4}
\|{\bf \partial_1 \partial_2}b_1\|_{L^2}^\frac{1}{4}\,\\
&\quad \times  \|\nabla u\|_{L^2}^\frac{1}{4}\|{\bf \partial_1}\nabla u \|_{L^2}^\frac{1}{4}\|{\bf \partial_2} \nabla u\|_{L^2}^\frac{1}{4}
\|{\bf \partial_1 \partial_2}\nabla u\|_{L^2}^\frac{1}{4}
\|\partial_2^3 b\|_{L^2} \|{\bf \partial_3 } \partial_2^3 b\|_{L^2}\\
& + \|b_1 \partial_2^3 b b\|_{L^2} \|\partial_2 u\|_{H^3}\\
\lesssim & \; \|b\|_{H^3}^2 \|\partial_2 u\|_{H^3} \|\partial_1 b\|_{H^2}^\frac{1}{2} \|\partial_3 b\|_{H^3}^\frac{1}{2} + \|u\|_{H^3}^\frac{1}{2} \|b\|_{H^3}^\frac{3}{2}
\|\partial_h u\|_{H^3}^\frac{1}{2} \|\partial_1 b\|_{H^2}^\frac{1}{2} \|\partial_3 b\|_{H^3}\\
& + \|b_1\|_{L^\infty} \|b\|_{L^2}^\frac{1}{4}\|{\bf \partial_1 }b\|_{L^2}^\frac{1}{4}\|{\bf \partial_2 }b\|_{L^2}^\frac{1}{4}\|{\bf \partial_1 \partial_2}b\|_{L^2}^\frac{1}{4}
\|\partial_2^3 b\|_{L^2}^\frac{1}{2}\|{\bf \partial_3} \partial_2^3 b\|_{L^2}^\frac{1}{2} \|b\|_{H^2} \|\partial_2 u\|_{H^3}\\
\lesssim & \; \|b\|_{H^3}^2 \|\partial_2 u\|_{H^3} \|\partial_1 b\|_{H^2}^\frac{1}{2} \|\partial_3 b\|_{H^3}^\frac{1}{2} + \|u\|_{H^3}^\frac{1}{2} \|b\|_{H^3}^\frac{3}{2}
\|\partial_h u\|_{H^3}^\frac{1}{2} \|\partial_1 b\|_{H^2}^\frac{1}{2} \|\partial_3 b\|_{H^3}\\
& + \|b\|_{H^3}^2 \|\partial_2 u\|_{H^3} \|\partial_1 b\|_{H^2}^\frac{1}{2} \|\partial_3 b\|_{H^3}^\frac{1}{2}.
\end{split}
\end{equation}
The last term $J_{1,2,4}$ can also be bounded via Lemma \ref{abound},
\begin{equation}\nonumber
\begin{split}
J_{1,2,4}  \lesssim & \; \|b_1\|_{L^2}^\frac{1}{4} \|{\bf \partial_1} b_1\|_{L^2}^\frac{1}{4} \|{\bf \partial_2} b_1\|_{L^2}^\frac{1}{4} \|{\bf \partial_1 \partial_2} b_1\|_{L^2}^\frac{1}{4} \|\partial_2^3 b \|_{L^2}^\frac{1}{2} \|{\bf \partial_3} \partial_2^3 b \|_{L^2}^\frac{1}{2} \|\partial_2^3 \partial_1 u\|_{L^2} \\
\lesssim & \; \|b\|_{H^3} \|\partial_1 u\|_{H^3} \|\partial_1 b\|_{H^2}^\frac{1}{2} \|\partial_3 b\|_{H^3}^\frac{1}{2}.
\end{split}
\end{equation}
It remains to estimate $I_5$,
\begin{equation}\nonumber
I_5 = \sum_{i = 1}^3\int_{\mathbb{R}^3} \Big( \partial_i^3 (b \cdot \nabla u) - b \cdot \nabla \partial_i^3 u \Big)\cdot  \partial_i^3 b \; dx = I_{5,1} +I_{5,2}+I_{5,3}.
\end{equation}
By H\"{o}lder's and Sobolev's inequalities,
\begin{equation}\nonumber
\begin{split}
I_{5,1} = &\sum_{k = 1}^3 \mathcal{C}_{3}^k \int_{\mathbb{R}^3} \partial_1^k b \cdot \nabla \partial_1^{3-k} u  \cdot\partial_1^3 b \; dx\\
\lesssim & \sum_{k = 1}^2 \|\partial_1^k b\|_{L^3} \|\nabla \partial_1^{3-k} u\|_{L^6} \| \partial_1^3 b\|_{L^2} +
\|\partial_1^3 b\|_{L^2}^2 \|\nabla u\|_{L^\infty}\\
\lesssim & \; \|b\|_{H^3} \|\partial_1 u\|_{H^3} \|\partial_1 b\|_{H^2} + \|u\|_{H^3} \|\partial_1 b\|_{H^2}^2
\end{split}
\end{equation}
and
\begin{equation}\nonumber
\begin{split}
I_{5,3} = &\sum_{k = 1}^3 \mathcal{C}_{3}^k \int_{\mathbb{R}^3} \partial_3^k b \cdot \nabla \partial_3^{3-k} u \cdot \partial_3^3 b \; dx\\
\lesssim & \sum_{k = 1}^3 \|\partial_3^k b\|_{L^3} \|\nabla \partial_3^{3-k} u\|_{L^2}  \|\partial_3^3 b\|_{L^6}\\
\lesssim & \; \|u\|_{H^3} \|\partial_3 b\|_{H^3}^2.
\end{split}
\end{equation}
The difficult term is $I_{5,2}$, which is further decomposed into
\begin{equation}\nonumber
\begin{split}
I_{5,2} = & \int_{\mathbb{R}^3} \Big[\partial_2^3 (b_1 \partial_1 u)+ \partial_2^3 (b_2 \partial_2 u) + \partial_2^3 (b_3 \partial_3 u)\Big]\cdot\partial_2^3 b \; dx \\
= &\; I_{5,2,1}+I_{5,2,2}+I_{5,2,3}.
\end{split}
\end{equation}
The last two terms $I_{5,2,2}$ and $I_{5,2,3}$ can be directly bounded.  By $\na\cdot b=0$,
\begin{equation}\nonumber
\begin{split}
I_{5,2,2} = & \sum_{k = 1}^3 \mathcal{C}_{3}^k \int_{\mathbb{R}^3} \partial_2^k b_2 \partial_2 \partial_2^{3-k} u\cdot\partial_2^3 b \; dx\\
\lesssim & \sum_{k = 1}^2 \|\partial_2^{k-1} (\partial_1 b_1 + \partial_3 b_3)\|_{L^3} \|\partial_2 \partial_2^{3-k} u\|_{L^6}\|\partial_2^3 b\|_{L^2}\\
&\;
+ \|\partial_2^2 (\partial_1 b_1 + \partial_3 b_3)\|_{L^2} \|\partial_2  u\|_{L^\infty}\|\partial_2^3 b\|_{L^2}\\
\lesssim & \; \|b\|_{H^3} \|\partial_2 u\|_{H^3} (\|\partial_1 b\|_{H^2} + \|\partial_3 b\|_{H^3}).
\end{split}
\end{equation}
By integration by parts and Lemma \ref{abound},
\begin{equation}\nonumber
\begin{split}
I_{5,2,3} = & \sum_{k = 1}^3 \mathcal{C}_{3}^k \int_{\mathbb{R}^3} \partial_2^k b_3 \partial_3 \partial_2^{3-k} u\cdot\partial_2^3 b \; dx\\
= & -\sum_{k = 1}^3 \mathcal{C}_{3}^k \int_{\mathbb{R}^3} \partial_2^k \partial_3 b_3 \partial_2^{3-k} u\cdot\partial_2^3 b \; dx  -
\sum_{k = 1}^3 \mathcal{C}_{3}^k \int_{\mathbb{R}^3} \partial_2^k b_3 \partial_2^{3-k} u\partial_2^3 \cdot\partial_3 b \; dx\\
\lesssim & \sum_{k = 1}^3\|\partial_2^k \partial_3 b_3\|_{L^2} \|\partial_2^{3-k} u\|_{L^2}^\frac{1}{4}\|{\bf \partial_1} \partial_2^{3-k} u\|_{L^2}^\frac{1}{4}
\|{\bf \partial_2} \partial_2^{3-k} u\|_{L^2}^\frac{1}{4} \\
&\, \quad \times \|{\bf \partial_1 \partial_2} \partial_2^{3-k} u\|_{L^2}^\frac{1}{4}\|\partial_2^3 b\|_{L^2}^\frac{1}{2}
\|{\bf \partial_3}\partial_2^3 b\|_{L^2}^\frac{1}{2}\\
& + \|\partial_2^k b_3\|_{L^2}^\frac{1}{2}\|{\bf \partial_3} \partial_2^k b_3\|_{L^2}^\frac{1}{2} \|\partial_2^{3-k} u\|_{L^2}^\frac{1}{4}\|{\bf \partial_1} \partial_2^{3-k} u\|_{L^2}^\frac{1}{4}
\|{\bf \partial_2} \partial_2^{3-k} u\|_{L^2}^\frac{1}{4}\\
& \quad \times \|{\bf \partial_1 \partial_2} \partial_2^{3-k} u\|_{L^2}^\frac{1}{4}\|\partial_2^3 \partial_3 b\|_{L^2}\\
\lesssim & \; \|u\|_{H^3}^\frac{1}{2}\|b\|_{H^3}^\frac{1}{2}\|\na_h u\|_{H^3}^\frac{1}{2} \|\partial_3 b\|_{H^3}^\frac{3}{2}.
\end{split}
\end{equation}
The estimate for $I_{5,2,1}$ is much more complex. We further break it down,
\begin{equation}\nonumber
\begin{split}
I_{5,2,1} = & \sum_{k = 1}^3 \mathcal{C}_{3}^k \int_{\mathbb{R}^3} \partial_2^k b_1 \partial_1 \partial_2^{3-k} u\cdot\partial_2^3 b \; dx\\
= & \sum_{k = 1}^3 \mathcal{C}_{3}^k \int_{\mathbb{R}^3} \partial_2^k b_1 \partial_1 \partial_2^{3-k} u_1\partial_2^3 b_1
+ \partial_2^k b_1 \partial_1 \partial_2^{3-k} u_2\partial_2^3 b_2 + \partial_2^k b_1 \partial_1 \partial_2^{3-k} u_3\partial_2^3 b_3\; dx \\
=& \; I_{5,2,1,1} + I_{5,2,1,2} +I_{5,2,1,3} .
\end{split}
\end{equation}
We estimate  $I_{5,2,1,1}$ and $I_{5,2,1,2}$ directly. By Lemma \ref{abound},
\begin{equation}\nonumber
\begin{split}
I_{5,2,1,1}
= & \sum_{k = 1}^2 \mathcal{C}_{3}^k \int_{\mathbb{R}^3} \partial_2^k b_1 \partial_1 \partial_2^{3-k} u_1\partial_2^3 b_1\; dx
+ \int_{\mathbb{R}^3} \partial_2^3 b_1 \partial_1 u_1\partial_2^3 b_1\; dx\\
\lesssim & \sum_{k = 1}^2 \|\partial_2^k b_1\|_{L^2}^\frac{1}{2}\|{\bf \partial_1} \partial_2^k b_1\|_{L^2}^\frac{1}{2} \|\partial_1 \partial_2^{3-k} u_1\|_{L^2}^\frac{1}{2}
\|{\bf \partial_2} \partial_1 \partial_2^{3-k} u_1\|_{L^2}^\frac{1}{2}\|\partial_2^3 b_1\|_{L^2}^\frac{1}{2}\|{\bf \partial_3} \partial_2^3 b_1\|_{L^2}^\frac{1}{2}\\
& + |I_{4,2,2,3}|\\
\lesssim & \; \|b\|_{H^3} \|\na_h u\|_{H^3} \|\partial_1 b\|_{H^2}^\frac{1}{2} \|\partial_3 b\|_{H^3}^\frac{1}{2} +  |I_{4,2,2,3}|.
\end{split}
\end{equation}
By H\"{o}lder's inequality and $\p_2 b_2 = -\p_1 b_1 - \p_3 b_3$,
\begin{equation}\nonumber
\begin{split}
I_{5,2,1,2}
= & \sum_{k = 1}^2 \mathcal{C}_{3}^k \int_{\mathbb{R}^3} \partial_2^k b_1 \partial_1 \partial_2^{3-k} u_2\partial_2^3 b_2\; dx
+ \int_{\mathbb{R}^3} \partial_2^3 b_1 \partial_1 u_2\partial_2^3 b_2\; dx\\
\lesssim & \sum_{k = 1}^2 \|\partial_2^k b_1\|_{L^3}\| \partial_1 \partial_2^{3-k} u_2\|_{L^6}\|\partial_2^3 b_2\|_{L^2}
 + \|\partial_2^3 b_1\|_{L^2} \|\partial_1 u_2\|_{L^\infty}\|\partial_2^3 b_2\|_{L^2}\\
 \lesssim & \; \|b\|_{H^3} \|\partial_1 u\|_{H^3} (\|\partial_1 b\|_{H^2} + \|\partial_3 b\|_{H^3}).
\end{split}
\end{equation}
The last term $I_{5,2,1,3}$ contains a part that can not be directly handled,
\begin{equation}\nonumber
\begin{split}
I_{5,2,1,3}
= & \sum_{k = 1}^2 \mathcal{C}_{3}^k \int_{\mathbb{R}^3} \partial_2^k b_1 \partial_1 \partial_2^{3-k} u_3\partial_2^3 b_3\; dx
+ \int_{\mathbb{R}^3} \partial_2^3 b_1 \partial_1 u_3\partial_2^3 b_3\; dx\\
\lesssim & \sum_{k = 1}^2 \|\partial_2^k b_1\|_{L^2}^\frac{1}{2}\|{\bf \partial_1} \partial_2^k b_1\|_{L^2}^\frac{1}{2} \|\partial_1 \partial_2^{3-k} u_3\|_{L^2}^\frac{1}{2}
\|{\bf \partial_2} \partial_1 \partial_2^{3-k} u_3\|_{L^2}^\frac{1}{2}\\
&\quad \times \|\partial_2^3 b_3\|_{L^2}^\frac{1}{2}\|{\bf \partial_3} \partial_2^3 b_3\|_{L^2}^\frac{1}{2} + K_1\\
\lesssim & \; \|b\|_{H^3} \|\partial_h u\|_{H^3} \|\partial_1 b\|_{H^2}^\frac{1}{2} \|\partial_3 b\|_{H^3}^\frac{1}{2} + K_1
\end{split}
\end{equation}
where
\begin{equation}\nonumber
K_1 = \int_{\mathbb{R}^3} \partial_2^3 b_1 \partial_1 u_3\partial_2^3 b_3\; dx.
\end{equation}
It does not appear to be possible to give a direct estimate on $K_1$. As in the estimate of $I_{4,2,2,3}$, we use the special structure of the equation for $b$ in (\ref{mhd}) and make the substitution
$$
\partial_1 u_3 = \p_t b_3 + u\cdot\na b_3 -\p_3^2 b_3 - b\cdot\na u_3.
$$
Then $K_1$ can be rewritten as
\begin{equation}\nonumber
\begin{split}
K_1 =& \int_{\mathbb{R}^3} \Big( \partial_t b_3 + u\cdot \nabla b_3 - \partial_3^2 b_3 - b \cdot \nabla u_3 \Big) \partial_2^3b_1 \partial_2^3b_3 \; dx \\
=& \;  K_{1,1} +K_{1,2}+K_{1,3}+K_{1,4}.
\end{split}
\end{equation}
We estimate $K_{1,2}$, $K_{1,3}$ and $K_{1,4}$ similarly as $J_2, J_3, J_4$ to obtain
\begin{equation}\nonumber
\begin{split}
|K_{1,2}| \lesssim & \; \|u\|_{H^3}^\frac{1}{2}\|b\|_{H^3}^\frac{3}{2} \|\na_h u\|_{H^3}^\frac{1}{2} \|\partial_1 b\|_{H^2}^\frac{1}{2}\|\partial_3 b\|_{H^3},\\
|K_{1,3}| \lesssim & \; \|b\|_{H^3} \|\partial_3 b\|_{H^3}^2,\\
|K_{1,4}| \lesssim & \; \|u\|_{H^3}^\frac{1}{2} \|b\|_{H^3}^\frac{3}{2} \|\na_h u\|_{H^3}^\frac{1}{2}
\|\partial_1 b\|_{H^2}^\frac{1}{2} \|\partial_3 b\|_{H^3}.
\end{split}
\end{equation}
By integration by parts,
\begin{equation}\nonumber
\begin{split}
K_{1,1} = \frac{d}{dt}\int_{\mathbb{R}^3} b_3 \partial_2^3b_1 \partial_2^3b_3 \; dx - \int_{\mathbb{R}^3} b_3 \p_t (\partial_2^3b_1 \partial_2^3b_3 )\; dx =K_{1,1,1} + K_{1,1,2}.
\end{split}
\end{equation}
According to the equation of $b$ in (\ref{mhd}),
\begin{equation}\nonumber
\begin{cases}
\partial_2^3 \p_t b_{1} + \partial_2^3(u\cdot \nabla b_1) - \partial_2^3 \partial_3^2 b_1 =  \partial_2^3 (b \cdot \nabla u_1) + \partial_2^3 \partial_1 u_1,\\
\partial_2^3 \p_t b_{3} + \partial_2^3(u\cdot \nabla b_3) - \partial_2^3 \partial_3^2 b_3 =  \partial_2^3 (b \cdot \nabla u_3) + \partial_2^3 \partial_1 u_3.
\end{cases}
\end{equation}
Hence,
\begin{equation}\nonumber
\begin{split}
K_{1,1,2} = & \int_{\mathbb{R}^3} b_3 \partial_2^3[u\cdot \nabla b_1 -  \partial_3^2 b_1  - b \cdot \nabla u_1 - \partial_1 u_1] \partial_2^3 b_3\\
& + b_3 \partial_2^3[u\cdot \nabla b_3 -  \partial_3^2 b_3  - b \cdot \nabla u_3 - \partial_1 u_3] \partial_2^3 b_1\; dx\\
= & \int_{\mathbb{R}^3} b_3 [\partial_2^3 (u\cdot \nabla b_1) \partial_2^3 b_3 + \partial_2^3 (u\cdot \nabla b_3) \partial_2^3 b_1] - b_3 [\partial_2^3 \partial_3^2 b_1 \partial_2^3 b_3 + \partial_2^3 \partial_3^2 b_3 \partial_2^3 b_1]\\
& -  b_3 [\partial_2^3(b \cdot \nabla u_1) \partial_2^3 b_3 + \partial_2^3(b \cdot \nabla u_3) \partial_2^3 b_1]
- b_3[\partial_2 ^3 \partial_1 u_1 \partial_2^3 b_3 + \partial_2 ^3 \partial_1 u_3 \partial_2^3 b_1] \\
=& \; K_{1,1,2,1} + K_{1,1,2,2}+ K_{1,1,2,3}+ K_{1,1,2,4}.
\end{split}
\end{equation}
As in the estimate of the term $J_{1,2,1}$, we have
\begin{equation}\nonumber
K_{1,1,2,1} \lesssim \; \|u\|_{H^3}^\frac{1}{2} \|b\|_{H^3}^\frac{3}{2} \|\na_h u\|_{H^3}^\frac{1}{2} \|\partial_1 b\|_{H^2}^\frac{1}{2}\|\partial_3 b\|_{H^3} +
\|b\|_{H^3}^2 \|\na_h u\|_{H^3} \|\partial_1 b\|_{H^2}^\frac{1}{2} \|\partial_3 b\|_{H^3}^\frac{1}{2}.
\end{equation}
Similar to the terms $J_{1,2,2}$, $J_{1,2,3}$ and $J_{1,2,4}$, we have
\begin{equation}\nonumber
\begin{split}
K_{1,1,2,2} = &\int_{\mathbb{R}^3} 2 b_3 \partial_2^3 \partial_3 b_1 \partial_2^3 \partial_3 b_3 + \partial_3 b_3 [\partial_2^3 \partial_3 b_3 \partial_2^3 b_1 + \partial_2^3 \partial_3 b_1 \partial_2^3 b_3] \;dx\\
\lesssim & \; \|b\|_{L^\infty} \|\partial_3 b\|_{H^3}^2 + \|\partial_3 b_3\|_{L^6} \|\partial_3 b\|_{H^3} \|b\|_{H^3} \\
\lesssim &  \;\|b\|_{H^3} \|\partial_3 b\|_{H^3}^2,
\end{split}
\end{equation}
\begin{equation}\nonumber
\begin{split}
K_{1,1,2,3} \lesssim & \; \|b\|_{H^3}^2 \|\partial_2 u\|_{H^3} \|\partial_1 b\|_{H^2}^\frac{1}{2} \|\partial_3 b\|_{H^3}^\frac{1}{2}\\
&\; + \|u\|_{H^3}^\frac{1}{2} \|b\|_{H^3}^\frac{3}{2}
\|\partial_h u\|_{H^3}^\frac{1}{2} \|\partial_1 b\|_{H^2}^\frac{1}{2} \|\partial_3 b\|_{H^3}\\
&\; + \|b\|_{H^3}^2 \|\partial_2 u\|_{H^3} \|\partial_1 b\|_{H^2}^\frac{1}{2} \|\partial_3 b\|_{H^3}^\frac{1}{2}
\end{split}
\end{equation}
and
\begin{equation}\nonumber
K_{1,1,2, 4} \lesssim \; \|b\|_{H^3} \|\partial_1 u\|_{H^3} \|\partial_1 b\|_{H^2}^\frac{1}{2} \|\partial_3 b\|_{H^3}^\frac{1}{2}.
\end{equation}
Integrating (\ref{E0:eq2}) in time, namely
$$
E_0(t) \lesssim \; E_0(0) + \int_0^t (I_2(\tau) + I_3(\tau) + I_4(\tau) + I_5(\tau))\,d\tau
$$
and inserting all the bounds obtained above for $I_2$ through $I_5$, we obtain \eqref{E0} after applying H\"{o}lder's inequality. To be clear, we provide some details. The bounds for $I_2$ in (\ref{jj}) and (\ref{jj1}) yield
\beno
\int_0^t I_2(\tau)\,d\tau  &\lesssim&  \int_0^t \|u\|_{H^3} \, \|\na_h u\|_{H^3}^2\,d\tau \\
&\le& \sup_{0\le \tau\le t} \|u(\tau)\|_{H^3}\, \int_0^t \|\na_h u\|_{H^3}^2\,d\tau \le E_0^{\frac32}(t).
\eeno
The bounds for $I_3$ in (\ref{jj2}), (\ref{jj3}) and (\ref{jj4}) lead to, by H\"{o}lder's inequality,
\beno
\int_0^t I_3(\tau)\,d\tau  &\lesssim&  \int_0^t \|b\|_{H^3} \|\partial_1 b\|_{H^2} \|\partial_1 u \|_{H^3}\,d\tau\\
&& + \int_0^t \|b\|_{H^3} \|\partial_1 b\|_{H^2}^\frac{1}{2} \|\partial_3 b\|_{H^3}^\frac{1}{2} \|\partial_2 u \|_{H^3}\,d\tau \\
&&  +  \int_0^t \|b\|_{H^3}^{\frac12}\, \|u\|_{H^3}^{\frac12}\,\|\partial_1 b\|_{H^2}^\frac{1}{2}\, \|\partial_3 b\|_{H^3} \|\partial_2 u \|_{H^3}^{\frac12}\,d\tau\\
&\lesssim& E_0(t)^{\frac12} \,E_1(t)^{\frac12}\, E_0(t)^{\frac12} + E_0(t)^{\frac12}
E_1(t)^{\frac14}\, E_0(t)^{\frac14}\, E_0(t)^{\frac12}\\
&\lesssim& E_0^{\frac32}(t) + E_1^{\frac32}(t).
\eeno
The bounds for $I_4$ involves a lot of terms and we shall just choose some typical ones to bound the time integral of $I_4$. For example,  the time integrals of the bounds for $I_{4, 3}$ in (\ref{pp1}), $J_4$ in (\ref{pp2}) and $J_{1,1}$ in (\ref{pp3})
obeys, by H\"{o}lder's inequality,
\beno
\int_0^t  \|u\|_{H^3}^\frac{1}{2}\|b\|_{H^3}^\frac{1}{2}\|\partial_2 u\|_{H^3}^\frac{1}{2} \|\partial_3 b\|_{H^3}^\frac{3}{2}\,d\tau &\le& E_0(t)^{\frac12}\, E_0(t)^{\frac14}\, E_0(t)^{\frac34}\, =E_0^{\frac32}(t),
\eeno
\beno
\int_0^t \|u\|_{H^3}^\frac{1}{2}\|b\|_{H^3}^\frac{3}{2} \|\na_h u\|_{H^3}^\frac{1}{2} \|\partial_1 b\|_{H^2}^\frac{1}{2} \|\partial_3 b\|_{H^3}\,d\tau  &\le& E_0(t)\, E_0(t)^{\frac14}\, E_1(t)^{\frac14}\, E_0(t)^{\frac12}\\
&\lesssim& E_0^2(t) + E_1^2(t)
\eeno
and
\beno
\int_0^t J_{1,1} \,d\tau &=& 3 \int_{\mathbb{R}^3} b_1 (\partial_2 ^3 b)^2 \;dx - 3 \int_{\mathbb{R}^3} b_1(x,0) (\partial_2 ^3 b)^2(x,0) \;dx \\
&\lesssim& \|b_1(0)\|_{L^\infty}\, \|b(0)\|_{H^3}^2 +
\|b_1(t)\|_{L^\infty}\, \|b(t)\|_{H^3}^2\\
&\lesssim& E_0(0)^{\frac32} +  E_0^{\frac32}(t).
\eeno
The time integral of $I_5$ is similarly bounded. This completes the proof of (\ref{E0}).
\end{proof}

\vskip .3in
\section{Proof of (\ref{E1})}
\label{sec3}
\setcounter {equation}{0}

This section proves (\ref{E1}), namely
$$
E_1(t) \lesssim E_1(0) + E_0(t) + E_0(t)^\frac{3}{2} + E_1(t)^\frac{3}{2}.
$$

\begin{proof}[Proof of (\ref{E1})] Due to the equivalence of the norm
	$\|\p_1 b\|_{H^2}$ and the norm $\|\p_1 b\|_{L^2} + \|\p_1 b\|_{\dot{H}^2}$, it suffices to estimate the $L^2$-norm  and the homogeneous $\dot H^2$-norm of $\p_1b$. We make use of the velocity equation  in  \eqref{mhd} to write
\begin{equation}\nonumber
  \partial_1b=\p_tu + u\cdot\nabla u-\Delta_h u +\nabla P-b\cdot\nabla b .
\end{equation}
Therefore,
\ben
  \| \partial_1 b\|_{L^2}^2 &= &\int_{\mathbb{R}^3} \p_t u \cdot\partial_1 b \; dx + \int_{\mathbb{R}^3} u\cdot\nabla u \cdot\partial_1 b \; dx \notag\\
  &&-\int_{\mathbb{R}^3} \Delta_h u\cdot \partial_1 b \; dx -\int_{\mathbb{R}^3} b\cdot\nabla b \cdot\partial_1 b \; dx \notag\\
  &=& N_1+N_2+N_3+N_4, \label{l2es}
\een
where we have eliminated the pressure term due to $\na\cdot b=0$.
We integrate by parts and use the equation of $b$ in (\ref{mhd}) to obtain
\begin{equation}\nonumber
\begin{split}
  N_1 =& \frac{d}{dt} \int_{\mathbb{R}^3}u\cdot \partial_1 b\; dx - \int_{\mathbb{R}^3}u\cdot \partial_1(-u\cdot\nabla b +\partial_3^2 b+b\cdot\nabla u +\partial_1 u)dx \\
  = &N_{1,0}+N_{1,1}+N_{1,2}+N_{13}+N_{1,4}.
\end{split}
\end{equation}
By Lemma \ref{abound} and H\"{o}lder's inequality,
\begin{equation}\nonumber
\begin{split}
  N_{1,1} = &-\int_{\mathbb{R}^3} u\cdot(\partial_1u \cdot\nabla b +u\cdot\nabla \partial_1 b)dx \\
  \lesssim  & \|\partial_1u\|_{L^2}\|u\|_{L^2}^{\frac{1}{4}}\|\partial_1 u\|_{L^2}^{\frac{1}{4}}\|\partial_2 u\|_{L^2}^{\frac{1}{4}}\|\partial_1 \partial_2u\|_{L^2}^{\frac{1}{4}}\|\nabla b\|_{L^2}^{\frac{1}{2}}\|\nabla \partial_3b \|_{L^2}^{\frac{1}{2}} \\
  & + \|u\|_{L^2}^{\frac{1}{2}}\|\partial_1u\|_{L^2}^{\frac{1}{2}}\|u\|_{L^2}^{\frac{1}{2}}\|\partial_2u\|_{L^2}^{\frac{1}{2}}\|\nabla\partial_1b\|_{L^2}^{\frac{1}{2}}
  \|\nabla\partial_1\partial_3 b\|_{L^2}^{\frac{1}{2}} \\
  \leq & \|\na_h u\|_{H^3}^{\frac{3}{2}} \|\partial_3 b\|_{H^3}^{\frac{1}{2}} \|u\|_{H^3}^{\frac{1}{2}} \|b\|_{H^3}^{\frac{1}{2}} + \|\na_h u\|_{H^3} \|\partial_1 b\|_{H^2}\|u\|_{H^3},
\end{split}
\end{equation}
\begin{equation}\nonumber
  N_{1,2} = -\int_{\mathbb{R}^3} u \cdot \partial_1\partial_3^2 b\, dx \leq \|\na_h u\|_{H^3}\|\partial_3b\|_{H^3},
\end{equation}
\begin{equation}\nonumber
\begin{split}
  N_{1,3}= & \int_{\mathbb{R}^3} (u\cdot (\partial_1b \cdot\nabla u) + u \cdot (b\cdot\nabla \partial_1 u))\; dx \\
  \lesssim &   \|u\|_{L^2}^{\frac{1}{2}}\|\partial_1u\|_{L^2}^{\frac{1}{2}}\|\nabla u\|_{L^2}^{\frac{1}{2}}\|\partial_2\nabla u\|_{L^2}^{\frac{1}{2}}\|\partial_1b\|_{L^2}^{\frac{1}{2}}
  \|\partial_1\partial_3 b\|_{L^2}^{\frac{1}{2}} \\
  & +   \|u\|_{L^2}^{\frac{1}{2}}\|\partial_1u\|_{L^2}^{\frac{1}{2}}\|b\|_{L^2}^{\frac{1}{2}}\|\partial_3 b\|_{L^2}^{\frac{1}{2}}\|\nabla\partial_1u\|_{L^2}^{\frac{1}{2}}
  \|\nabla\partial_1\partial_2 u\|_{L^2}^{\frac{1}{2}} \\
  \leq & \|\na_h u\|_{H^3}\|\partial_1 b\|_{H^2}\|u\|_{H^3} + \|\na_h u\|_{H^3}^{\frac{3}{2}}\|\partial_3 b\|_{H^3}^{\frac{1}{2}} \|u\|_{H^3}^{\frac{1}{2}} \| b\|_{H^3}^{\frac{1}{2}}
\end{split}
\end{equation}
and
\begin{equation}\nonumber
  N_{1,4}= -\int_{\mathbb{R}^3} u \cdot \partial_1^2 u \; dx \leq  \|\na_h u\|_{H^3}^2.
\end{equation}
Similarly,
\begin{equation}\nonumber
\begin{split}
N_2 = &\int_{\mathbb{R}^3} u\cdot \nabla u \cdot\partial_1 b \; dx\\
\lesssim & \|u\|_{L^2}^\frac{1}{2} \|\partial_1 u\|_{L^2}^\frac{1}{2} \|\nabla u\|_{L^2}^\frac{1}{2}
\|\nabla \partial_2 u\|_{L^2}^\frac{1}{2} \|\partial_1 b\|_{L^2}^\frac{1}{2} \|\partial_1 \partial_3 b\|_{L^2}^\frac{1}{2} \\
\leq & \|\na_h u\|_{H^3}\|\partial_1b\|_{H^2}\|u\|_{H^3},
\end{split}
\end{equation}
\begin{equation}\nonumber
  N_3 = -\int_{\mathbb{R}^3} \Delta_h u \cdot\partial_1 b \; dx \leq \|\na_h u\|_{H^3} \|\partial_1 b\|_{H^2}
\end{equation}
and
\begin{equation}\nonumber
\begin{split}
  N_4 = &-\int_{\mathbb{R}^3} b\cdot\nabla b \cdot\partial_1 b \; dx \\
  \lesssim &\|b\|_{L^2}^\frac{1}{2} \|\partial_1 b\|_{L^2}^\frac{1}{2} \|\nabla b\|_{L^2}^\frac{1}{2}
\|\nabla \partial_3 b\|_{L^2}^\frac{1}{2} \|\partial_1 b\|_{L^2}^\frac{1}{2} \|\partial_1 \partial_2 b\|_{L^2}^\frac{1}{2} \\
\leq &\|\partial_1b\|_{H^2}^{\frac{3}{2}} \|\partial_3b\|_{H^3}^{\frac{1}{2}} \|b\|_{H^3}.
\end{split}
\end{equation}
This finishes the $L^2$ estimate of $\p_1 b$. Now we turn to the $\dot{H}^2$  estimate.  According to the equation of $u$ in (\ref{mhd}), we have
\ben
  \sum_{i=1}^3 \|\partial_i^2 \partial_1b \|_{L^2}^2 &= & \sum_{i=1}^3 \int_{\mathbb{R}^3} \partial_i^2 \p_t u \cdot \partial_i^2 \partial_1b\; dx \notag \\
  && + \sum_{i=1}^3 \int_{\mathbb{R}^3} \partial_i^2 (u\cdot\nabla u) \cdot  \partial_i^2 \partial_1b\; dx\notag \\
  &&-\sum_{i=1}^3 \int_{\mathbb{R}^3} \partial_i^2 \Delta_h u  \cdot  \partial_i^2 \partial_1b\; dx \notag\\
  && - \sum_{i=1}^3 \int_{\mathbb{R}^3} \partial_i^2 (b\cdot\nabla b) \cdot \partial_i^2 \partial_1b\; dx\notag \\
  &=& M_1+M_2+M_3+M_4. \label{h2es}
\een
To bound  $M_1$, we integrate by parts and use the equation of $b$ in (\ref{mhd}) to obtain
\begin{equation}\nonumber
\begin{split}
  M_1 = & \frac{d}{dt} \sum_{i=1}^3 \int_{\mathbb{R}^3} \partial_i^2 u \cdot \partial_i^2\partial_1b \; dx \\
  & - \sum_{i=1}^3 \int_{\mathbb{R}^3} \partial_i^2 u  \cdot \partial_i^2\partial_1 (-u\cdot\nabla b + \partial_3^2b+b\cdot\nabla u+\partial_1u)\; dx \\
  =& M_{1,0}+M_{1,1}+M_{1,2}+M_{1,3}+M_{1,4}.
\end{split}
\end{equation}
By Lemma \ref{abound},
\beno
  M_{1,1}&= & \int_{\mathbb{R}^3} \partial_1^2u  \cdot \partial_1^3(u\cdot\nabla b) + \partial_2^2u  \cdot \partial_2^2\partial_1(u\cdot\nabla b)+ \partial_3^3\partial_1u  \cdot \partial_3(u\cdot\nabla b) \; dx \\
  &\leq &  \| \na_h u\|_{H^3} \Big( \|\na_h u\|_{H^3}\|b\|_{H^3} + \|\partial_1b\|_{H^2}\|u\|_{H^3} \Big) \\
  &&+ \| \na_h u\|_{H^3} \Big( \|\partial_3u\|_{L^2}^{\frac{1}{4}} \|\partial_1\partial_3u\|_{L^2}^{\frac{1}{4}}\|\partial_2\partial_3u\|_{L^2}^{\frac{1}{4}}\|\partial_1\partial_2\partial_3u\|_{L^2}^{\frac{1}{4}}
  \|\nabla b\|_{L^2}^{\frac{1}{2}}\|\nabla\partial_3b\|_{L^2}^{\frac{1}{2}} \\
  &&\qquad + \|u\|_{L^2}^{\frac{1}{4}} \|\partial_1u\|_{L^2}^{\frac{1}{4}}\|\partial_2u\|_{L^2}^{\frac{1}{4}}\|\partial_1\partial_2u\|_{L^2}^{\frac{1}{4}}
  \|\partial_3\nabla b\|_{L^2}^{\frac{1}{2}}\|\nabla\partial_3^2b\|_{L^2}^{\frac{1}{2}} \Big) \\
  &\leq & \| \na_h u\|_{H^3} \Big( \|\na_h u\|_{H^3}\|b\|_{H^3} + \|\partial_1b\|_{H^2}\|u\|_{H^3} + \|\na_h u\|_{H^3}^{\frac{1}{2}} \|\partial_3b\|_{H^3}^{\frac{1}{2}}\|u\|_{H^3}^{\frac{1}{2}}\|b\|_{H^3}^{\frac{1}{2}}      \Big).
\eeno
Clearly,
$$
  M_{1,2} = -\sum_{i=1}^3 \int_{\mathbb{R}^3} \partial_1\partial_i^3 u\cdot \partial_i \partial_3^2 b \; dx \\
  \leq \|\na_h u\|_{H^3} \|\partial_3 b\|_{H^3} .
$$
By Lemma \ref{abound},
\begin{equation}\nonumber
\begin{split}
  M_{1,3} = & - \int_{\mathbb{R}^3} \partial_1^2u \cdot \partial_1^3(b\cdot\nabla u) + \partial_2^2u\cdot  \partial_2^2\partial_1(b\cdot\nabla u)+ \partial_3^3\partial_1u\cdot  \partial_3(b\cdot\nabla u) \; dx \\
  \leq & \| \na_h u\|_{H^3} \Big( \|\na_h u\|_{H^3}\|b\|_{H^3} + \|\partial_1b\|_{H^2}\|u\|_{H^3} \Big) \\
  &+ \| \na_h u\|_{H^3} \Big( \|\nabla u\|_{L^2}^{\frac{1}{4}} \|\partial_1\nabla u\|_{L^2}^{\frac{1}{4}}\|\partial_2\nabla u\|_{L^2}^{\frac{1}{4}}\|\partial_1\partial_2\nabla u\|_{L^2}^{\frac{1}{4}}
  \|\partial_3 b\|_{L^2}^{\frac{1}{2}}\|\partial_3^2b\|_{L^2}^{\frac{1}{2}} \\
  &\qquad + \|\nabla\partial_3u\|_{L^2}^{\frac{1}{4}} \|\partial_1\partial_3\nabla u\|_{L^2}^{\frac{1}{4}}\|\partial_2\partial_3\nabla u\|_{L^2}^{\frac{1}{4}}\|\partial_1\partial_2\partial_3\nabla u\|_{L^2}^{\frac{1}{4}}
  \|b\|_{L^2}^{\frac{1}{2}}\|\partial_3b\|_{L^2}^{\frac{1}{2}} \Big) \\
  \leq & \| \na_h u\|_{H^3} \Big( \|\na_h u\|_{H^3}\|b\|_{H^3} + \|\partial_1b\|_{H^2}\|u\|_{H^3} + \|\na_h u\|_{H^3}^{\frac{1}{2}} \|\partial_3b\|_{H^3}^{\frac{1}{2}}\|u\|_{H^3}^{\frac{1}{2}}\|b\|_{H^3}^{\frac{1}{2}}      \Big).
\end{split}
\end{equation}
Obviously,
$$
M_{1,4} \leq \sum_{i=1}^3 \int_{\mathbb{R}^3} \Big| \partial_i^2 \partial_1 u \Big|^2 \; dx \leq \|\na_h u\|_{H^3}^2.
$$
By Lemma \ref{abound},  $M_2$ is bounded by
\begin{equation}\nonumber
\begin{split}
M_2= & \int_{\mathbb{R}^3} \partial_1^2(u\cdot\nabla u)\cdot \partial_1^3 b + \partial_2^2(u\cdot\nabla u)\partial_2^2\cdot\partial_1 b +\partial_1(u\cdot\nabla u)\cdot\partial_3^4 b \; dx \\
\leq & \|\partial_1 b\|_{H^2}\|\na_h u\|_{H^3}\|u\|_{H^3} \\
&+ \| \partial_3 b\|_{H^3} \Big( \|\nabla u\|_{L^2}^{\frac{1}{4}} \|\partial_1\nabla u\|_{L^2}^{\frac{1}{4}}\|\partial_2\nabla u\|_{L^2}^{\frac{1}{4}}\|\partial_1\partial_2\nabla u\|_{L^2}^{\frac{1}{4}}
  \|\partial_1 u\|_{L^2}^{\frac{1}{2}}\|\partial_1\partial_3 u \|_{L^2}^{\frac{1}{2}} \\
  &\qquad + \|u\|_{L^2}^{\frac{1}{4}} \|\partial_1 u\|_{L^2}^{\frac{1}{4}}\|\partial_2 u\|_{L^2}^{\frac{1}{4}}\|\partial_1\partial_2 u\|_{L^2}^{\frac{1}{4}}
  \|\partial_1\nabla u\|_{L^2}^{\frac{1}{2}}\|\partial_1\partial_3\nabla u\|_{L^2}^{\frac{1}{2}} \Big) \\
  \leq & \|\partial_1 b\|_{H^2}\|\na_h u\|_{H^3}\|u\|_{H^3} + \|\partial_3 b\|_{H^3}\|\na_h u\|_{H^3}\|u\|_{H^3}.
\end{split}
\end{equation}
The bound for  $M_3$ is straightforward,
\begin{equation}\nonumber
  M_3 = \sum_{i=1}^3 \int_{\mathbb{R}^3} \partial_i^2 \Delta_h u \partial_i^2 \partial_1 b \; dx \leq \|\partial_h u\|_{H^3}\|\partial_1b\|_{H^2}.
\end{equation}
The last term  $M_4$ can be  bounded via Lemma \ref{abound},
\begin{equation}\nonumber
\begin{split}
  M_4 =&  \int_{\mathbb{R}^3} \partial_1^2(b\cdot\nabla b)\cdot \partial_1^3b+ \partial_3^2 (b\cdot\nabla b)\cdot \partial_3^2\partial_1b+\partial_2^2(b\cdot\nabla b)\cdot \partial_2^2\partial_1 b \; dx \\
  \leq & \|\partial_1b\|_{H^2}^2\|b\|_{H^3}+\|\partial_1b\|_{H^2}\|\partial_3b\|_{H^3} \|b\|_{H^3} \\
   &+ \| \partial_2^2\partial_1 b\|_{L^2} \Big( \|\nabla b\|_{L^2}^{\frac{1}{4}} \|\partial_1\nabla b\|_{L^2}^{\frac{1}{4}}\|\partial_2\nabla b\|_{L^2}^{\frac{1}{4}}\|\partial_1\partial_2\nabla b\|_{L^2}^{\frac{1}{4}}
  \|\partial_2^2 b\|_{L^2}^{\frac{1}{2}}\|\partial_2^2\partial_3 b \|_{L^2}^{\frac{1}{2}} \\
  &\qquad + \|\partial_2b\|_{L^2}^{\frac{1}{4}} \|\partial_1 \partial_2b\|_{L^2}^{\frac{1}{4}}\|\partial_2^2 b\|_{L^2}^{\frac{1}{4}}\|\partial_1\partial_2^2 b\|_{L^2}^{\frac{1}{4}}
  \|\partial_2\nabla b\|_{L^2}^{\frac{1}{2}}\|\partial_2\partial_3\nabla b\|_{L^2}^{\frac{1}{2}}  \\
  &\qquad + \|b\|_{L^2}^{\frac{1}{4}} \|\partial_1 b\|_{L^2}^{\frac{1}{4}}\|\partial_2 b\|_{L^2}^{\frac{1}{4}}\|\partial_1\partial_2 b\|_{L^2}^{\frac{1}{4}}
  \|\partial_2^2\nabla b\|_{L^2}^{\frac{1}{2}}\|\partial_2^2\partial_3\nabla b\|_{L^2}^{\frac{1}{2}} \Big) \\
\leq & \|\partial_1b\|_{H^2}^2\|b\|_{H^3}+\|\partial_1b\|_{H^2}\|\partial_3b\|_{H^3} \|b\|_{H^3} +\|\partial_1b\|_{H^2}^{\frac{3}{2}}\|\partial_3b\|_{H^3}^{\frac{1}{2}}\|b\|_{H^3}.
\end{split}
\end{equation}
Adding (\ref{l2es}) and (\ref{h2es}), integrating in time, invoking the bound for $N_1$ through $N_4$ and $M_1$ through $M_4$, and applying H\"{o}lder's inequality to the time integrals, we obtain (\ref{E1}). For the sake of clarity, we provide the details. The time integrals of $N_{1, 0}$ and $M_{1,0}$ are bounded by
\beno
\int_0^t N_{1,0}\,d\tau &=&
\int_{\mathbb{R}^3}u(x, t)\cdot \partial_1 b(x, t)\; dx
- \int_{\mathbb{R}^3}u(x, 0)\cdot \partial_1 b(x, 0)\; dx\\
 &\le& E_0(t) + E_0(0),
\eeno
\beno
\int_0^t M_{1,0}\,d\tau &=&  \sum_{i=1}^3 \int_{\mathbb{R}^3} \partial_i^2 u(x,t) \cdot \partial_i^2\partial_1b(x,t) \; dx \\
&&- \sum_{i=1}^3 \int_{\mathbb{R}^3} \partial_i^2 u(x,0)\cdot \partial_i^2\partial_1b(x,0) \; dx
\le E_0(t) + E_0(0).
\eeno
By H\"{o}lder's inequality,
$$
\int_0^t N_{1,1} d\tau \lesssim E_0^{\frac32}(t) + E_1^{\frac32}(t), \qquad \int_0^t N_{1,3}\,d\tau \lesssim E_0^{\frac32}(t) + E_1^{\frac32}(t).
$$
Clearly,
$$
\int_0^t N_{1,2}\,d\tau \le E_0(t), \qquad \int_0^t N_{1,4}\,d\tau \le E_0(t)
$$
and
$$
\int_0^t N_2\,d\tau \le E_0^{\frac32}(t) + E_1^{\frac32}(t),  \qquad
\int_0^t N_4\,d\tau \le E_0^{\frac32}(t) + E_1^{\frac32}(t).
$$
The integral of $N_3$ is slightly different. By H\"{o}lder's inequality,
$$
\int_0^t N_3\,d\tau \le E_0(t)^{\frac12} \, E_1(t)^{\frac12} \le \frac14 E_1(t) + C\,E_0(t).
$$
Furthermore,
$$
\int_0^t M_{1,1}\,d\tau,\,\,\int_0^t M_{1,2}\,d\tau  \lesssim E_0^{\frac32}(t) + E_1^{\frac32}(t), \quad \int_0^t M_{1,2}\,d\tau, \,\, \int_0^t M_{1,4}\,d\tau \lesssim E_0(t),
$$
$$
\int_0^t M_2\,d\tau \lesssim E_0^{\frac32}(t), \quad \int_0^t M_{4}\,d\tau\lesssim E_0^{\frac32}(t) + E_1^{\frac32}(t)
$$
and
$$
\int_0^t M_3\,d\tau \le E_0(t)^{\frac12} \, E_1(t)^{\frac12} \le \frac14 E_1(t) + C\,E_0(t).
$$
Combining all the bounds above yields
$$
E_1(t) \le E_0(0) + \frac12 E_1(t) + C\, E_0(t) + C\, E_0(t)^{\frac32} + C\, E_1(t)^{\frac32},
$$
which gives (\ref{E1}). This completes the proof of (\ref{E1}).
\end{proof}

\vskip .3in
\centerline{\bf Acknowledgments}

\vskip .1in
Jiahong Wu is partially supported by NSF grant DMS 1624146 and the AT\&T Foundation at Oklahoma State University.
Yi Zhu is partially supported by Shanghai Sailing Program (No.18YF1405500) and NSFC (No.11801175).

\vskip .4in


\begin{thebibliography}{89}



\bibitem{AZ} H. Abidi and P. Zhang,
 On the global solution of 3D MHD system with initial data near
equilibrium, {\it Commun. Pure Appl. Math. \bf 70} (2017),  1509--1561.

\bibitem{BSS} C. Bardos, C. Sulem and P.L. Sulem, Longtime dynamics of a conductive fluid in the presence of a strong magnetic field, {\it Trans. Am. Math. Soc. \bf 305} (1988), 175--191.

\bibitem{Bis} D. Biskamp, {\it Nonlinear Magnetohydrodynamics}, Cambridge University Press, Cambridge, 1993.


\bibitem{Bou} J.  Bourgain and D. Li, Strong ill-posedness of the incompressible Euler equation in borderline Sobolev spaces, {\it Invent. Math. \bf 201} (2015),
No.1, 97--157.

\bibitem{Bou1} J.  Bourgain and D. Li, Strong illposedness of the incompressible Euler equation in integer $C^m$ spaces, {\it Geom. Funct. Anal. \bf 25} (2015), No. 1, 1--86.

\bibitem{CaiLei} Y. Cai and Z. Lei, Global well-posedness of the incompressible magnetohydrodynamics, {\it Arch. Ration. Mech. Anal. \bf 2018} (228), No.3, 969--993.

\bibitem{CaoReWu} C. Cao, D. Regmi and J. Wu, The 2D MHD equations with horizontal dissipation and horizontal magnetic diffusion, {\it J. Differential Equations \bf 254} (2013), 2661-2681.

\bibitem{CaoWu} C. Cao and J. Wu, Global regularity for the 2D MHD equations with mixed partial dissipation and magnetic diffusion, {\it Adv.  Math. \bf 226} (2011), 1803-1822.

\bibitem{CaoWuYuan} C. Cao, J. Wu and B. Yuan, The 2D incompressible
magnetohydrodynamics equations with only magnetic diffusion, {\it SIAM J. Math. Anal.} {\bf 46} (2014),  588--602.

\bibitem{ChenWang} G.-Q. Chen and D. Wang, Global solutions of nonlinear magnetohydrodynamics
with large initial data, {\it J. Differential Equations \bf 182} (2002), 344-376.


\bibitem{Davi} P.A. Davidson, {\it An Introduction to Magnetohydrodynamics},  Cambridge University Press, Cambridge, England, 2001.

\bibitem{Deng} W. Deng and P. Zhang, Large Time Behavior of Solutions to 3-D MHD System with Initial Data Near Equilibrium, {\it Arch. Ration. Mech. Anal. \bf 230} (2018), No.3, 1017--1102.

\bibitem{DongLiWu1} B. Dong, Y. Jia, J. Li and J. Wu, Global regularity and time decay for the 2D magnetohydrodynamic equations with fractional dissipation and partial magnetic diffusion, {\it J. Math. Fluid Mechanics \bf 20} (2018), No.4, 1541--1565.

\bibitem{DongLiWu0} B. Dong, J. Li and J. Wu, Global regularity for the 2D MHD equations with partial hyperresistivity, {\it International Math Research Notices}, 2018,  rnx240, https://doi.org/10.1093  /imrn/rnx240.

\bibitem{Tarek} T. Elgindi and N. Masmoudi, Ill-posedness results in critical spaces for some equations arising in hydrodynamics, 	arXiv:1405.2478 [math.AP]

\bibitem{FNZ} J. Fan, H. Malaikah, S. Monaquel, G. Nakamura and  Y. Zhou, Global Cauchy
problem of 2D generalized MHD equations, {\it Monatsh. Math. \bf 175} (2014),  127-131.

\bibitem{Fefferman1}C.L. Fefferman, D.S. McCormick, J.C. Robinson and J.L. Rodrigo, Higher order commutator estimates and local existence for the non-resistive MHD equations and related models, {\it J. Funct. Anal. \bf 267} (2014), No. 4, 1035--1056.

\bibitem{Fefferman2}C.L. Fefferman, D.S. McCormick, J.C. Robinson and J.L. Rodrigo, Local existence for the non-resistive MHD equations in nearly optimal Sobolev spaces, {\it Arch. Ration. Mech. Anal. \bf 223} (2017),  677--691.

\bibitem{HeXuYu}L. He, L. Xu and P. Yu, On global dynamics of three dimensional magnetohydrodynamics: nonlinear stability
of Alfv\'{e}n waves, {\it Ann. PDE \bf 4} (2018), Art.5, 105 pp.

\bibitem{HuX} X. Hu, Global existence for two dimensional compressible
magnetohydrodynamic flows with zero magnetic diffusivity, arXiv: 1405.0274v1 [math.AP] 1 May 2014.

\bibitem{HuLin} X. Hu and F. Lin, Global Existence for Two Dimensional Incompressible Magnetohydrodynamic Flows with Zero Magnetic Diffusivity, arXiv: 1405.0082v1 [math.AP] 1 May 2014.

\bibitem{HuWang} X. Hu and D. Wang, Global existence and large-time behavior of solutions to the three-dimensional equations of compressible magnetohydrodynamic flows, {\it Arch. Ration. Mech. Anal. \bf 197} (2010), 203-238.

\bibitem{HuangLi}X. Huang and J. Li, Serrin-type blowup criterion for viscous, compressible, and heat conducting Navier-Stokes and magnetohydrodynamic flows, {\it Comm. Math. Phys. \bf 324} (2013), 147-171.



\bibitem{JNW} Q. Jiu, D. Niu, J. Wu, X. Xu and H. Yu, The 2D magnetohydrodynamic equations with magnetic diffusion, {\it Nonlinearity \bf 28} (2015),  3935-3955.

\bibitem{JiuZhao2} Q. Jiu and J. Zhao, Global regularity of 2D generalized MHD equations with magnetic diffusion, {\it Z. Angew. Math. Phys. \bf 66} (2015), 677-687.


\bibitem{Lieb} E. Lieb and M. Loss, \textit{Analysis}, American Mathematical Society, Providence,  2001.

\bibitem{LinZhang1} F. Lin, L. Xu, and  P. Zhang, Global small solutions to 2-D incompressible MHD system, {\it J. Differential Equations} {\bf 259} (2015), 5440--5485.

\bibitem{PanZhouZhu} R. Pan, Y. Zhou and Y. Zhu,  Global classical solutions of three dimensional viscous MHD system without magnetic diffusion on periodic boxes, {\it Arch. Ration. Mech. Anal.  \bf  227} (2018), No.2, 637--662.

\bibitem{Pri} E. Priest and T. Forbes, {\it Magnetic Reconnection, MHD Theory and Applications}, Cambridge University Press, Cambridge, 2000.

\bibitem{Ren} X. Ren, J. Wu, Z. Xiang and Z. Zhang, Global existence and decay of smooth solution for the 2-D MHD equations without magnetic diffusion, {\it J. Functional Analysis \bf 267} (2014),  503--541.

\bibitem{Ren2} X. Ren, Z. Xiang and Z. Zhang, Global well-posedness for the 2D MHD equations without magnetic diffusion in a strip domain, {\it Nonlinearity \bf 29} (2016), No.4, 1257--1291.

\bibitem{Tan} Z. Tan and Y. Wang, Global well-posedness of an initial-boundary value problem for viscous non-resistive MHD systems,  {\it SIAM J. Math. Anal. \bf 50} (2018), No.1, 1432--1470.

\bibitem{Tao} T. Tao, {\it Nonlinear Dispersive Equations: Local and Global Analysis},  CBMS Regional Conference Series in Mathematics, Providence, RI: American Mathematical Society, 2006.

\bibitem{WeiZ} D. Wei and Z. Zhang, Global well-posedness of the MHD equations in a homogeneous magnetic field, {\it Anal. PDE \bf 10} (2017), No.6,  1361--1406.

\bibitem{WuMHD2018}J. Wu, The 2D magnetohydrodynamic equations with partial or fractional dissipation,  {\it Lectures on the analysis of nonlinear partial differential equations}, Morningside Lectures on Mathematics, Part 5, MLM5, pp. 283-332, International Press, Somerville, MA, 2018.


\bibitem{WuWu} J. Wu and Y. Wu, Global small solutions to the compressible 2D magnetohydrodynamic system without magnetic diffusion, {\it Adv. Math. \bf 310} (2017), 759--888.

\bibitem{WuWuXu} J. Wu, Y. Wu and X. Xu, Global small solution to the 2D MHD system with a velocity damping term, {\it SIAM J. Math. Anal. \bf 47} (2015), 2630-2656.

\bibitem{Yam1} K. Yamazaki, On the global well-posedness of N-dimensional generalized MHD system in anisotropic spaces, {\it Adv. Differential Equations \bf 19} (2014),  201--224.

\bibitem{Yam2} K. Yamazaki, Remarks on the global regularity of the two-dimensional magnetohydrodynamics system with zero dissipation, {\it Nonlinear Anal. \bf 94} (2014), 194--205.

\bibitem{Yam3} K. Yamazaki, On the global regularity of two-dimensional generalized magnetohydrodynamics system, {\it J. Math. Anal. Appl. \bf 416} (2014), 99--111.

\bibitem{Yam4} K. Yamazaki, Global regularity of the logarithmically supercritical MHD system with zero diffusivity, {\it Appl. Math. Lett. \bf 29} (2014), 46-51.

\bibitem{YuanZhao} B. Yuan and J. Zhao, Global regularity of 2D almost resistive MHD equations, {\it Nonlinear Anal. Real World Appl. \bf 41} (2018), 53--65.

\bibitem{Ye} Z. Ye, Remark on the global regularity of 2D MHD equations with almost Laplacian
magnetic diffusion, {\it J. Evol. Equations \bf 18} (2018), No.2,  821-844.


\bibitem{TZhang} T. Zhang, An elementary proof of the global existence and uniqueness theorem to 2-D incompressible non-resistive MHD system, arXiv:1404.5681v1 [math.AP] 23 Apr 2014.

\bibitem{ZhouZhu} Y. Zhou and Y. Zhu,  Global classical solutions of 2D MHD system with only magnetic diffusion on periodic domain, {\it J. Math. Phys. \bf 59} (2018), No.8, 081505, 12 pp.

\end{thebibliography}
\end{document}